\numberwithin{equation}{section}
\newtheorem{theorem}{Theorem}[section]
\newtheorem{remark}[theorem]{Remark}
\newtheorem{lemma}[theorem]{Lemma}
\newtheorem{example}[theorem]{Example}
\newtheorem{assumption}[theorem]{Assumption}
\newcommand{\tu}{\tilde{u}}
\newcommand{\hu}{\hat{u}}
\def\bU{\boldsymbol{U}}
\newcommand{\RR}{\mathbb{R}}
 \newcommand{\Real}{\mathbb{R}}
\def\bx{\boldsymbol{x}}
\def\bn{\boldsymbol{n}}
\def\bxi{\boldsymbol{\xi}}
\def\btheta{\boldsymbol{\theta}}
\def\btau{\boldsymbol{\tau}}
\def\leq{\leqslant}
\def\geq{\geqslant}
\def\Lo{\mathcal{L}}
\newcommand{\eps}{\varepsilon}
\newcommand{\abs}[1]{\lvert#1\rvert}
\newcommand{\norm}[1]{\lVert#1\rVert}
 \newcommand{\set}[1]{ \left\{#1\right\}}
\newcommand{\rd}{\mathrm{d}}
\newcommand{\dom}{{D}}
\newcommand{\Od}{\mathcal{O}}
\newcommand{\rin}{\mathrm{int}}
\newcommand{\ext}{\mathrm{ext}}
\renewcommand{\vec}[1]{\boldsymbol{#1}}
\DeclareMathAlphabet{\mathsfsl}{OT1}{cmss}{m}{sl}
\newcommand{\tensor}[1]{\mathsfsl{#1}}
\newcommand{\domep}{{D_\eps^+}}
\newcommand{\domen}{{D_\eps^-}}
\newcommand{\Ge}{{\Gamma_\eps}}
\def\bx{\boldsymbol{x}}
\newcommand{\sqbk}[1]{\left[  #1 \right]}
\begin{document}

\title[]
{Two-parameter Asymptotic expansions for elliptic equations with
small geometric perturbation and
  high contrast ratio}

\author[J. Chen]{Jingrun Chen}
\address{Mathematical center for interdisciplinary research and
School of Mathematical Sciences, Soochow University, Suzhou, 215006, China} \email{jingrunchen@suda.edu.cn}

\author[L. Lin]{Ling Lin}
\address{Department of Mathematics, City University of Hong Kong, Tat Chee Ave, Kowloon,
Hong Kong SAR} \email{linling059@gmail.com}

\author[Z. Zhang]{Zhiwen Zhang}
\address{Department of Mathematics, The University of Hong Kong, Pokfulam, Hong Kong SAR} \email{zhangzw@maths.hku.hk}

\author[X. Zhou]{Xiang Zhou}
\address{Department of Mathematics, City University of Hong Kong, Tat Chee Ave, Kowloon,
Hong Kong SAR} \email{xiang.zhou@cityu.edu.hk}

\date{\today}
\begin {abstract}
We consider the asymptotic solutions of an interface problem corresponding to an elliptic partial differential equation with Dirichlet boundary condition and transmission condition, subject to the small geometric perturbation and the high contrast ratio of the conductivity. We consider two types of perturbations: the first corresponds to a thin layer coating a fixed bounded domain and the second is the perturbation of the interface. As the perturbation size tends to zero and the ratio of the conductivities in two subdomains tends to zero, the two-parameter asymptotic expansions on the fixed reference domain are derived to any order after the single parameter expansions are solved beforehand. Our main tool is the asymptotic analysis based on the Taylor expansions for the properly extended solutions on fixed domains. The Neumann boundary condition and Robin boundary condition arise in  two-parameter expansions, depending on the relation of the geometric perturbation size and the contrast ratio.
\end {abstract}

\subjclass[2010]{34E05, 35B40, 35C20, 35J25, 41A58}
\keywords{asymptotic analysis;
geometric perturbation;
 interface problem; high-contrast ratio; two-parameter expansion}
\maketitle


\section{Introduction}

Let $\dom\subset \RR^d$, $d=1,2,3$, be
  a simply-connected Lipschitz continuous domain.
Consider the perturbation of the domain $\dom$ given by
 the perturbed  boundary $\partial \dom_\eps$ defined as  \begin{equation}\label{h}
\partial \dom_\eps=\set{\bx':\bx'=\bx+\eps h(\bx)\bn(\bx):\bx\in\partial \dom},
\end{equation}
where $\eps\in (0,~\eps_0]$ for a fixed small number $\eps_0 \ll 1$ represents the small characteristic size of the perturbation,
$h(\bx)$ is a continuous function defined on $\partial \dom$,
and  $\bn(\bx)$ is  the (outward) normal direction of $\dom$.
For sufficiently small $\eps$, the boundary $\partial\dom_\eps$ uniquely defines a perturbed domain $\dom_\eps$.
If $h$ is non-negative, then    $\dom_\eps$ contains $\dom$.
We assume that $h$ is a sufficiently smooth function.

The main problem  of our concern
is related to the following Dirichlet boundary value elliptic problem imposed in the  perturbed domain $\dom_\eps$:
\begin{equation}\label{eqn:uePDE}
\begin{cases}
&\Lo u_\eps=f \quad \text{in } \dom_\eps,\\
& u_\eps=g \quad \text{on } \partial \dom_\eps,
\end{cases}
\end{equation}
where $\Lo$ is the second order elliptic operator,   having the  divergence  form
\begin{equation}\label{def:Lo}
\Lo u=-\sum_{i,j=1}^d \partial_{x_j}\bigl(a^{ij}(\bx)\partial_{x_i}u\bigr)+\sum_{i=1}^db^i(\bx)\partial_{x_i}u+c(\bx)u.
\end{equation}
The second order coefficient functions $a^{ij}$, $i,j=1,\cdots, d$, form a
 non-degenerate positive definite  matrix $\tensor{a}=(a^{ij})$, i.e., $a^{ji}=a^{ij}$,
 and
\begin{equation}\label{eqn:ellipcond}
\sum_{i,j=1}^d a^{ij}(\bx)\xi_i\xi_j > 0,
\end{equation}
{for every } $\bx\in\overline{\dom\cup\dom_{\eps_0}}$  and  non-zero vector $(\xi_1,\cdots,\xi_d)\in\RR^d$.
The coefficients $b^i$, $i=1,\cdots, d$, and $c$  are assumed smooth in $\Real^d$.
The boundary value function $g$ is also assumed smooth in an open neighbourhood of $\partial\dom$.

If the   coefficient $\tensor{a}$ is assumed to be continuous everywhere,
then the solution $u_\eps$ is the perturbation of a classic elliptic equation with uncertainty
in characterizing the domain.
How to quantify the uncertainty in the solution due to the geometric perturbation, particularly
when $h$ is a random function, is an interesting and important topic in uncertainty quantification.
The more challenging case is that $\tensor{a}$ is not continuous across some interface. Then
the transmission condition should be specified on the jump interface.
In such cases, the interface  may also be subject to small perturbations.

There are two scenarios  of the geometric  perturbations  in the transmission problems.
The first one is to consider the previous domain perturbation setup
with a non-negative $h$, then
  $\dom\subset \dom_\eps$ and  the interface is $\Gamma=\partial \dom$,
  which is fixed and separates the domain $\dom$ and the thin layer
\[
L_\eps =\set{\bx': \bx' = \bx + th(\bx) \bn(\bx),  0< t< \eps, \bx \in \partial \dom, h(\bx)\neq  0}.
\]
 We call this model   {\it the    thin layer problem.}
The second scenario  is to partition a fixed domain $\dom$ into two subdomains:
$
\dom = \domep \cup  \domen \cup \Gamma_\eps,
$
where $\Gamma_\eps$ is the dividing interface,
which  is assumed as  a perturbation     from
a fixed interface $\Gamma$.  The difference between $\Gamma_\eps$ and
$\Gamma$ can be also described by a function $h$. The detailed definitions of $\domep,\domen,\Gamma_\eps$
will be specified later.
  We call this model   {\it the perturbed interface  problem.}
  In the first problem,
 we attach a thin layer  $L_\eps$ to encircle the fixed domain $\dom$
 and the layer thickness   vanishes as $\eps$ tends to zero.
 The interface there is fixed.
 In the second problem, we  partition  a fixed domain $\dom$
into two subdomains $\dom^{\pm}_{\eps}$ by  a perturbed interface
$\Gamma_\eps$  and the two subdomains 
have comparable size.

All these perturbations can be either deterministic or random,
depending on whether  $h$ is a deterministic function or a random field.
For the latter case,
after $h$ is expended  in random space   by Karhunen-Lo\`{e}ve theorem
$h(\bx,\omega)=\sum h_i(\bx) \phi_i(\omega)$,
or by the Monte Carlo samples
 $h(\bx,\omega) \sim h_i(\bx) $,
the problem   usually can be transformed to a
set of  deterministic  perturbations if the correlation length of
$h$ is not vanishing.  So, we only focus
on the deterministic $h$ here;
the application to the random case
may follow the standard approaches
used   in many literatures such as
\cite{Xiu:2006, HarbLi2013,Castrill2016,Dambrine2016SINUM}.

There is a distinctive class of perturbations of the domain
for the PDE \eqref{eqn:uePDE}:
the so called ``rough boundary/rough domain'',  in which  the spatial scale of the profile
$h$ also depends on $\eps$, for instance,
$\dom$ is perturbed by the form $\eps h_\eps(\bx)=\eps \bar{h}(\bx/\eps)$
for a periodic function $\bar{h}$ (see \cite{Madureira:2017ut} and references therein).
When the boundary condition itself also involves the similar
multiscale feature,    the multiscale finite element method
was applied and analyzed by \cite{MingXu2016}.

To explicitly show the transmission condition
and to introduce our second asymptotic parameter other than
the perturbation size $\eps$,
we take the simplest case of the thin layer problem corresponding to the
first scenario mentioned above.
In this case,  $\dom\subset \dom_\eps$,
 $\Gamma=\partial \dom$ is the interface,
separating the domain $\dom$ and the thin layer
$L_\eps= \dom_\eps \setminus \bar{\dom}$.
Assume that the coefficients $b$ and $c$ vanish and that  $\tensor{a}$ is scalar-valued and
is piecewisely homogeneous   in   $\dom$ and
  $L_\eps$.
Then the corresponding transmission problem takes the form
\begin{equation} \label{Interface-Prob}
\begin{cases}
& -\Delta u_\eps=f   \quad \text{in } {\dom},\\
& -\sigma\Delta  u_\eps =f  \quad \text{in } {L_\eps},\\
& u_\rin=u_\ext, \quad \displaystyle\partial_{\bn} u_\rin= \sigma\partial_{\bn} u_\ext  \quad \text{on } {\Gamma}, \\
& u_\eps=0 \quad \text{on } \partial\dom_\eps,
\end{cases}
\end{equation}
where $\sigma$ is a constant parameter  representing the ratio of conductivity
in two different domains.
$u_\rin$ and $u_\ext$  are the restrictions of the solution $u_\eps$
on two subdomains $D$ and $L_\eps$, respectively.  The similar form of the transmission condition will be specified later
for the general problems. If the material property across the interface
has a significant  difference,  then the value of $\sigma$ can take a very small value or a very large value. The resulted  transmission  problem in
this high-contrast   media
is an important subject  in multiscale analysis and computation.

The elliptic model \eqref{eqn:uePDE}  and the  transmission problem such as
\eqref{Interface-Prob}
originate from many applications such as
diffusion processes, electrostatics, porous media and heat conduction.
One of our motivating examples is
 the diffusion model  of   exciton      in organic semiconductors (\cite{Lin:2014, Guide:2014, Chen:2016}).
 For the discontinuous coefficient model \eqref{Interface-Prob},
a well-known problem is
the electromagnetic model for bodies coated with a dielectric layer
$ L_\eps$ with distinctive material coefficients.
In porous media applications, the permeability of subsurface regions is described as a quantity with high-contrast and multiscale features.

We here mainly concern the asymptotic analysis in terms of the two different
parameters, $\eps$ and $\sigma$, where $\eps$ represents the amplitude of the  geometric perturbation
on the domain or the interface, and $\sigma$ represents the
ratio of different  material coefficients.
In this paper, we shall first consider the asymptotic effect of each  parameter separately and then work on the more complicated  two-parameter expansions.

Many theories and methods have been developed and used to study the
above elliptic problems and the interface problems.
We   review some general methodologies  on the asymptotic study for the solution $u_\eps$
subject to the geometric perturbations. The first idea to handle the   irregular domain $\dom_\eps$ is the {\it domain mapping},
which  is to find a smooth mapping to
change the irregular domain to a fixed  reference  domain.
See the reference \cite{Xiu:2006,Castrill2016,HarbDM2016} for the applications and the analysis of this method.
This method works for any irregular domain as long as a diffeomorphism
can be found regardless it is a small perturbation or not. By applying the diffeomorphism transformation, all geometric information
is transformed into a new differential operator
and a new boundary condition, which
are both more complicated than the original form on irregular domain.
The second method, particularly for the perturbed interface problem, is a generalization of calculus of variation to the geometric setting --- the {\it shape derivative} (\cite{Harbrecht2008,HarbLi2013}).
The method of shape derivatives is  widely used for the sensitivity analysis of the geometry of the boundary
and shape optimization.
Although it is quite easy to obtain the first few order   derivatives,
the calculation  is   very complicated for   the    higher order derivatives.
 The last method, which is also our main tool here, is
 the {\it asymptotic expansion}, which   actually refers to a collection of problem-specific methods
and relies on the correct  use of the ans\"{a}tz
 (\cite{Vial2005,Caloz2006AA,CEG2014,Dambrine2016SINUM}).
In this method,  by using a good  regularity of the solution in the correct (sub)domains,
one can apply certain ans\"{a}tz in the form of  the series expansion to approximate the boundary conditions on the fixed domain.
More details
on the   application of this method to our  problems of concerns
will be reviewed and commented in subsequent sections.

The main motivation of this article   is   to
give a comprehensive study on the (formal) asymptotic expansions
of the solutions to the above various elliptic problems, including the thin layer problem and the
interface problem, up to an arbitrary order in theory.
Specifically, we shall address the following four  problems.

 \begin{enumerate}
 \item[(I)] The first task is that for the elliptic model \eqref{eqn:uePDE}
 with smooth $\tensor{a}$ ,
 we want to have in $\dom$
 \begin{equation}\label{expn}
  u_\eps = u_0 + \eps u_1  + \eps^2 u_2 +  \eps^3 u_3+ \ldots,
 \end{equation}
 in certain sense,
where all terms $u_i$ are independent of $\eps$ explicitly.
Then we want to construct a sequence $\{u^{[n]}\}$ of functions
satisfying the following properties:
(i)   Each $u^{[n]}$ is the solution to a boundary value problem defined only on the fixed domain $\dom$;
(ii)  The error between the restriction of $u_\eps$ to $\dom$ and $u^{[n]}$
is  limited to  the order $\Od(\eps^{n+1})$;
(iii) The numerical computation
(which is not our objective in this paper) of $u^{[n]}$ should be easier than  directly solving the original equation \eqref{eqn:uePDE}.
 Note that $u^{[n]}$ is not simply the partial sum $\sum_{i=0}^n \eps^n\,u_n$, because the latter
may not satisfy a closed boundary value problem.

\item[(II)]
 The second task is to generalize the results in  (I) to the
 thin layer problem \eqref{Interface-Prob} for
the case of the discontinuous   coefficient $a$.

\item[(III)]  The third one is the generalization of (II) to
the   high-contrast material, i.e.,
 $\sigma$, the ratio of  material coefficients across the interface $\Gamma$,
is very large or very small.
We want to derive
 the  two-parameter expansions
when the limits of  both $\eps$  and  $\sigma$ are considered.
We are concerned with the three scaling regimes for $\eps$ and $\sigma$:
 $\eps/\sigma\rightarrow 0$,
  $\eps/\sigma \rightarrow \infty$,
and  $\eps/\sigma\rightarrow c\in(0,~\infty)$.
The final result is  the boundary value problem for each term
in the two-parameter asymptotic  expansions 
$u_\rin(\bx)=\sum_{m,n} u_{m,n}(\bx)\eps^m\mu^n$, where $m,n$ are  integers, and $\mu$ is linked to the ratio of $\eps$ and $\mu$, whose specific form
depends on the asymptotic regimes.
We shall show that the three scalings
will give arise to the Dirichlet,
Neumann
or  Robbin  boundary condition for
 $u_{m,n}$, respectively.

\item[(IV)] The last one is   on the perturbed
  interface problem where
  the interface $\Gamma_\eps$ is not fixed
  as in (II) and (III), but
 is associated with  a perturbed  domain partition
$\dom = \domep \cup  \domen \cup \Gamma_\eps$.
  Meanwhile,  the high-contrast ratio limit is also considered,
and  we derive the two-parameter asymptotic expansions,
  where we find there is no special dependence on
  the scaling of $\eps$ and $\sigma$.

\end {enumerate}

From Section 2 to Section 5, we solve each of these  four problems
in each section.
The techniques we used for (I) and (II) are different from the existing methods.
The   two-parameter  asymptotic expansions
for (III) and (IV)  in this paper are new results.
The main techniques we apply here for all four problems
are the Taylor expansion applied in various contents, which
all requires a good regularity of the underlying function.
For the thin layer problem or the interface problem, where the   solution $u_\eps$
apparently does not posses such smoothness on the interface,
our idea is first to extend each smooth component of the solution $u_\eps$ on each subdomain  onto
some $\eps$-independent domains before applying any asymptotic expansions.
This is achieved by imposing certain Cauchy problems on the interface 
when interpreting the elliptic equation as a time-evolution equation 
in which the normal direction of the interface is the time marching direction.
The second important idea is
to apply the inverse  {Lax-Wendroff} procedure
(\cite{Shu:2010})
to convert the high order derivatives in the normal direction  on the interface to those along
 the tangent directions and  the first order normal derivative, for which the
 original transmission condition on the interface is utilized.

To end this introduction, we review  several existing works
which are closely related to the problems we considered here.
 The work in
\cite{Dambrine2016SINUM}   considered the  thin layer problem   \eqref{Interface-Prob}
with a fixed $\sigma$ as $\eps\to 0$.
The main idea in \cite{Dambrine2016SINUM} is to
 write the differential operator $\Lo$
in terms of local coordinate in the thin layer $L_\eps$,
and apply the ans\"{a}tz  $\Lo = \sum_{n\geq -2} \eps^n \Lo_n$
  to derive  a  system of (infinitely number of) recursive equations
for the expansion of the solution in this dilated layer.
Then with the aid of the transmission condition on the interface $\Gamma$,
the boundary conditions of  these equations in the layer $L_\eps$ are linked to the solutions in the interior (fixed) domain $\dom$.
In \cite{CEG2014},
to assist the construction of local solutions
in the multiscale finite element methods for the    elliptic equations in high-contrast media,  the authors derived asymptotic expansions for the solutions of the elliptic problems with high contrast ratio, i.e., $\sigma$ tends to $0$ or $\infty$. But their analysis is for the fixed domain and interface.

%
%
%
\section{The elliptic problem with   smooth coefficients}
\label{sec:2}
In this section,
we study the equation \eqref{eqn:uePDE} on $\dom_\eps$ by assuming that $\tensor{a}(\bx)$
is sufficiently smooth everywhere and $h(\bx)$ in \eqref{h}
is also sufficiently smooth on $\partial\dom$.   This means that the Taylor expansion for these two functions
are available up to any order.
The signs of $h(\bx)$ can be arbitrary at different $\bx\in\partial\dom$
and the operator $\Lo$ in \eqref{def:Lo} is not limited to the Laplace operator.

Recall that the
perturbed thin layer $L_\eps$ is defined by
\[
L_\eps =\set{\bx': \bx' = \bx + th(\bx) \bn(\bx),  0< t< \eps, \bx \in \partial \dom, h(\bx)\neq  0}.
\]
The condition
$h(\bx) \neq 0$ ensures that $L_\eps$ is also a domain (open set).
Depending on the sign of the function $h$,
we can decompose  the thin layer $L_\eps$  into the interior layer $L_{\eps,\rin}$ and the external layer $L_{\eps,\ext}$:
 $$L_\eps=L_{\eps,\rin}\cup L_{\eps,\ext},$$
 where
\[
L_{\eps,\rin}:=L_\eps\cap\dom
=\set{\bx': \bx' = \bx + t h(\bx) \bn(\bx),  0< t< \eps, \bx \in \partial \dom, h(\bx)<0},
  \]
\[L_{\eps,\ext}:=L_\eps\setminus\dom
=\set{\bx': \bx' = \bx + t h(\bx) \bn(\bx),  0< t< \eps, \bx \in \partial \dom, h(\bx)>0}.
  \]
 $L_{\eps,\rin}\subset \dom$ and $L_{\eps,\ext}\cap \dom=\emptyset$.
Then  $\dom_\eps$ is the interior of $\overline{(\dom\setminus L_{\eps,\rin})\cup L_{\eps,\ext}}.$
Refer to the schematic illustration in Figure \ref{f:shape}.
\begin{figure}[htp]
\includegraphics[width=0.4\textwidth]{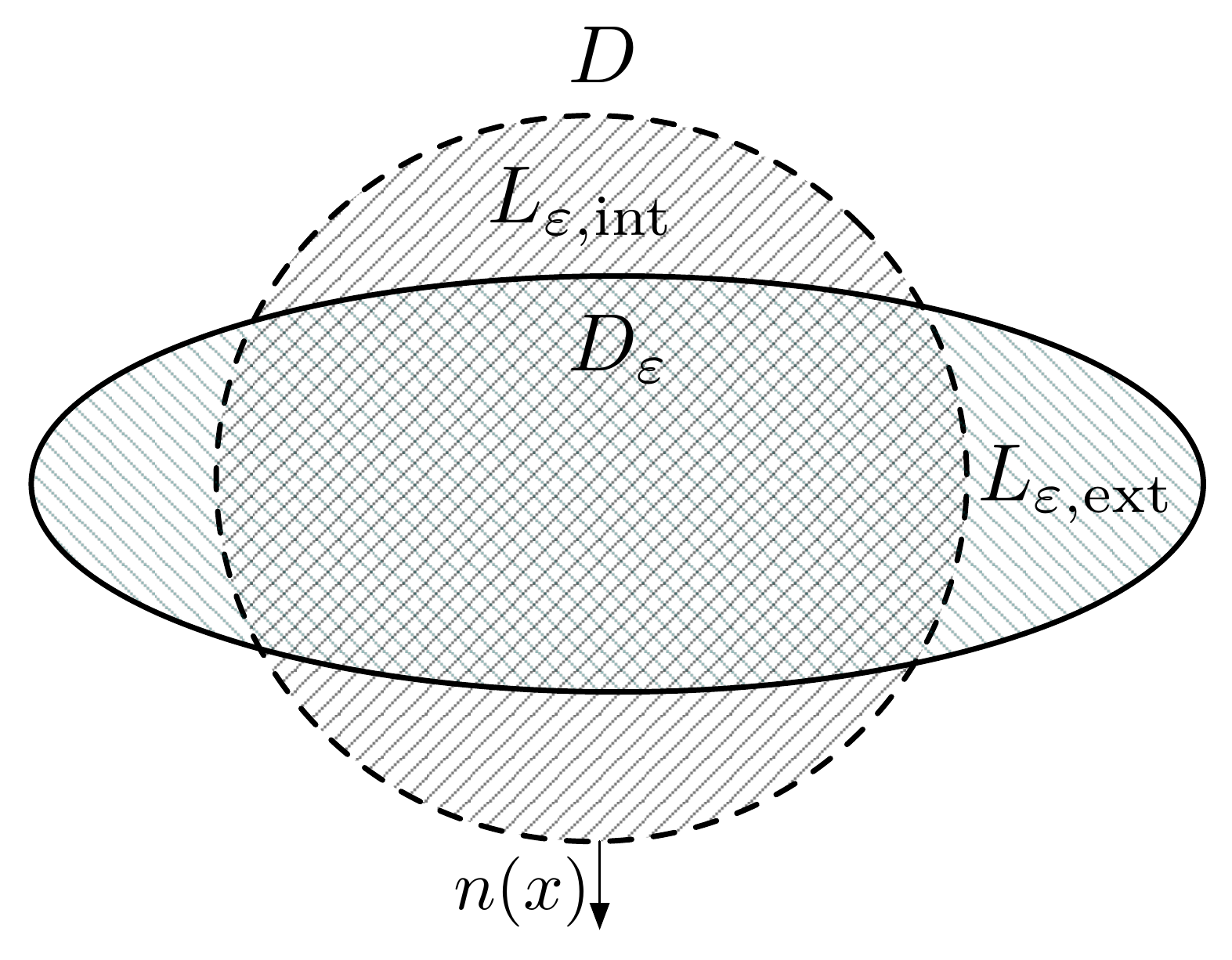}
\caption{Schematic illustration of the domain perturbation. The regular domain $\dom$ is in the ``ball'' shape and the perturbed domain $\dom_\eps$
is in the ``ellipse'' shape.}
\label{f:shape}
\end{figure}

\subsection{Approximate expansions} \label{ssec:AA}
The   problem \eqref{eqn:uePDE} is defined on the $\eps$-dependent  domain $\dom_\eps$.
We extend it to a fixed domain $\dom\cup \dom_{\eps_0}$
and justify this extension
in Section \ref{subsubsec:extension}.
Then in Section \ref{sssec:aymp},
 we use  the Taylor expansion near $\partial\dom$ to derive the
asymptotic expansion  $u_\eps =\sum_{n\geq 0} \eps^n u_n$,  for which
the  inverse Lax-Wendroff procedure is applied to convert the high order normal derivatives
into the first order normal derivative and the tangential derivatives along the boundary $\partial \dom$.

\cite{Dambrine2016SINUM}  already derived   the first three terms, $u_0$, $u_1$ and $u_2$.
But the method we give below seems simpler and
   does not require the dilation technique and any asymptotic form for the
differential operator $\Lo$ used in \cite{Dambrine2016SINUM}.
Actually,  that kind of  singular perturbation suits for the case that the solution itself develops a sharp peak in the thin layer,
such as the traditional boundary layer analysis in fluid mechanics.
However, the problem here does not have this feature
and  the solutions   on $\dom$ and $\dom_\eps$
both behave very normally at the order $\Od(1)$.
We find that the direct expansion for the  boundary condition of $u_\eps$
in an appropriate way is sufficient
to derive the boundary condition  of $u_n$.
To present our main technique, we start with the smooth $\tensor{a}$ case in this section
and then show how to
generalize to  the discontinuous   $\tensor{a}$ in  Section \ref{sec:3}.

\subsubsection{The extension of the solution to the fixed  domain}\label{subsubsec:extension}
Note that $\dom\cup\dom_\eps$ is increasing in $\eps$
since $L_{\eps,\ext}$ always expands as $\eps$ increases.
So it is convenient to make the extension
to the  whole   domain $\dom\cup\dom_{\eps_0}$ since we only consider    $\eps \in (0,~\eps_0]$.
On this fixed domain $\dom\cup\dom_{\eps_0}$, the solution is known on the part  $\overline{\dom_\eps}$;
we thus consider  the difference   $ \triangle_{\eps}$ which
consists of the disjoint  thin layers:
\[
 \triangle_{\eps}:=(\dom\cup\dom_{\eps_0})\setminus\overline{\dom_\eps} =L_{\eps,\rin}\cup N_{\eps}, ~~
\mbox{ where } N_{\eps}:=
 L_{\eps_0,\ext}\setminus \overline{L_{\eps,\ext}}.
\]
Denote the solution extended on $\triangle_{\eps}$ by  $\tu_\eps$,
and assume that  $\tu_\eps$  and $u_\eps$  have the same values and
the same normal derivatives on the common boundary $\partial \dom_\eps$.
Specifically,  $\tu_\eps$ is the unique solution to  the following {\it Cauchy} problem     posed in the thin layers $L_{\eps,\rin}$ and
 $N_{\eps}$:
\begin{equation}\label{eqn:uilPDE}
\begin{cases}
 \Lo \tu_\eps=f \qquad   &\text{in } L_{\eps,\rin}\cup N_{\eps},\\
\tu_\eps=u_\eps=g, \quad \partial_{\bn}  \tu_\eps=\partial_{\bn} u_\eps \quad &\text{on } \partial \dom_\eps,
\end{cases}
\end{equation}
where $u_\eps$, the solution to equation \eqref{eqn:uePDE}, is presumably given,
$\bn$ is the outward normal of $\dom_\eps$ on $\partial\dom_\eps$.
Note that $\partial\dom_\eps$ is  a proper subset of the boundaries of  $L_{\eps,\rin}$ and $ N_{\eps}$.
The problem \eqref{eqn:uilPDE} is actually a Cauchy problem of $\tu_\eps$, not  a boundary-valued elliptic problem,
because the value and the ``velocity'' of $\tu_\eps$ are specified  on $\partial\dom_\eps$ ---
a part of its complete boundary.
The boundary $\partial\dom_\eps$ satisfies the noncharacteristic condition
$\sum_{i,j=1}^da^{ij}n_in_j\neq 0$  trivially since
$\Lo$ is, by assumption, an elliptic operator satisfying \eqref{eqn:ellipcond}. Thus
by the Cauchy-Kovalevskaya theorem  (\cite{Evans:1998}),
the solution on $\partial \dom$ can propagate to the boundary $\partial \triangle_\eps$
and
 the above Cauchy problem \eqref{eqn:uilPDE} is well-posed
 for  sufficiently small  $\eps_0$.
\begin{remark}
The above method of extending the solution to a larger
(and $\eps$-indepedent) domain
 can also preserve the regularity of the solution
and
 helps clarify the rigorous meaning of the
Taylor expansion we shall apply.
This extension idea  by the use of the Cauchy problem
of a time-evolution equation
 will be applied     repeatedly in this paper,
especially for the interface problem so that each smooth component of the solution on each subdomain
may be approximated by the Taylor expansion
along some interface.

\end{remark}

Now it is clear that we can define a function $w_\eps$   piecewisely    on the whole (fixed) domain $\overline{\dom}\cup\overline{\dom_{\eps_0}}=
\overline{\dom_\eps}\cup  \overline{\triangle_\eps}$
as follows:
 \begin{equation}
 w_\eps(\bx) := \begin{cases}
 u_\eps(\bx) &  \mbox{ in } \overline{\dom_\eps},\\
 \tu_\eps(\bx) &  \mbox{ in }  \overline{\triangle_\eps}.
 \end{cases}
 \end{equation}
This definition is justified by the boundary condition in \eqref{eqn:uilPDE} which dictates that $u_\eps$ and $\tu_\eps$ coincide on the common boundary
$\partial \dom_\eps$.
Then $w_\eps$ satisfies the equation on the fixed domain
\begin{equation}\label{eqn:wPDE}
\Lo w_\eps=f \quad \text{in } \dom\cup\dom_{\eps_0},
\end{equation}
and on the $\eps$-dependent  boundary.
\begin{equation}\label{eqn:wBC}
w_\eps=g, \quad  \text{on } \partial \dom_\eps.
\end{equation}
Note that  \eqref{eqn:wBC} does not serve
as a boundary condition to the equation  \eqref{eqn:wPDE}.
  $w_\eps$ is simply
   a   combination of $u_\eps$ from
    the boundary value problem \eqref{eqn:uePDE}  and $\tu_\eps$ from  the Cauchy problem \eqref{eqn:uilPDE}   .
The above argument of extension ensures that $w_\eps$
has the same regularity of $u_\eps$, but   on $\dom\cup\dom_{\eps_0}$.

\subsubsection{Asymptotic expansion on the whole domain}\label{sssec:aymp}
By the above extension,  we can  assume  the  following ans\"{a}tz for $w_\eps$,
\begin{equation}\label{eqn:wans}
w_\eps(\bx)=\sum_{n=0}^\infty \eps^nw_{n}(\bx) \quad \text{for }\bx\in \overline{\dom\cup\dom_{\eps_0}}.
\end{equation}
Plug this ans\"{a}tz into  the equation \eqref{eqn:wPDE}, and
match the terms at the same order of $\eps$,   then we obtain the following equations for   $w_n$
in $\dom\cup\dom_{\eps_0}$:
\begin{equation}\label{eqn:wnPDE}
\Lo w_n=\delta_{0,n}f.
\end{equation}
Here $\delta_{i,j}=1$ if $i=j$ and  $\delta_{i,j}=0$ if $i\neq j$.

For the   condition \eqref{eqn:wBC}, $w_\eps=g$ on $\partial \dom_\eps$,
by noticing the fact that $\bx+\eps h(\bx)\bn(\bx)\in\partial \dom_\eps$
for all $\bx\in\partial \dom$,  we have
\begin{equation}\label{eqn:swnbdry}
w_\eps\bigl(\bx+\eps h(\bx)\bn(\bx)\bigr)=\sum_{n=0}^\infty \eps^nw_n\bigl(\bx+\eps h(\bx)\bn(\bx)\bigr) =g\bigl(\bx+\eps h(\bx)\bn(\bx)\bigr).
\end{equation}
The Taylor expansions in $\eps$ on the right-hand side read
\begin{equation}\label{eqn:Taylor}
w_n\bigl(\bx+\eps h(\bx)\bn(\bx)\bigr)=\sum_{k=0}^\infty \frac{\eps^k\bigl(h(\bx)\bigr)^k}{k!}{\partial_{\bn}^k}w_n(\bx),
\end{equation}
\begin{equation}\label{eqn:gTaylor}
g(\bx+\eps h(\bx)\bn(\bx))=\sum_{k=0}^\infty \frac{\eps^k\bigl(h(\bx)\bigr)^k}{k!}{\partial_{\bn}^k}g(\bx),
\end{equation}
where for any vector field $\bn(\bx)=\bigl(n_1(\bx),\cdots,n_d(\bx)\bigr)$, the $k$-th directional derivative along $\bn$ at $\bx=\bx_0$ is defined by
\[
\partial_{\bn}^k\big|_{\bx=\bx_0}:=\biggr(\sum_{i=1}^d n_i(\bx_0)\partial_{x_i}\bigg|_{\bx=\bx_0}\biggr)^k.
\]
Then \eqref{eqn:swnbdry}, \eqref{eqn:Taylor} and \eqref{eqn:gTaylor}  together lead to
\[
\sum_{n=0}^\infty \eps^n\sum_{k=0}^{\infty}\frac{\eps^{k}\bigl(h(\bx)\bigr)^k}{k!} {\partial_{\bn}^k}w_n(\bx)=\sum_{k=0}^\infty \frac{\eps^k\bigl(h(\bx)\bigr)^k}{k!}{\partial_{\bn}^k}g(\bx),
\]
which, by a change of the indices $m=k+n$, is equivalent to
\[
\sum_{m=0}^{\infty}\eps^{m} \sum_{k=0}^m\frac{\bigl(h(\bx)\bigr)^k}{k!}{\partial_{\bn}^k}w_{m-k}(\bx)=\sum_{m=0}^\infty \frac{\eps^m\bigl(h(\bx)\bigr)^m}{m!}{\partial_{\bn}^m}g(\bx).
\]
Then by matching the terms with the same order of  $\eps$, we obtain that
\[
\sum_{k=0}^m\frac{\bigl(h(\bx)\bigr)^k}{k!}{\partial_{\bn}^k}w_{m-k}(\bx)=\frac{\bigl(h(\bx)\bigr)^m}{m!}{\partial_{\bn}^m}g(\bx),
\]
i.e.,
\begin{equation}\label{eqn:wmbdc}
\begin{cases}
w_0(\bx) = g(\bx), \\
w_{m}(\bx)=\dfrac{\bigl(h(\bx)\bigr)^m}{m!}{\partial_{\bn}^m}g(\bx)
-\displaystyle\sum_{k=1}^m \dfrac{\bigl(h(\bx)\bigr)^k}{k!}{\partial_{\bn}^k}w_{m-k}(\bx), ~~\forall m\geq 1.
\end{cases}
\end{equation}
This provides a recursive expression of  the boundary condition on $\partial \dom$ for
the $m$-th order term $w_m$.

Define   $u_n$ as  the restriction of   $w_n$ to $\dom$. Then
$u_\eps=\sum_{n=0}^\infty \eps^n u_n$. By \eqref{eqn:wnPDE} and \eqref{eqn:wmbdc},  $u_n$ satisfies the following sequence of
boundary value problems on $\dom$ where the boundary conditions on $\partial \dom$
are defined recursively:
\begin{equation}
\label{PDE:u0}
\begin{cases}
&\Lo u_0=f \quad \text{in } \dom,\\
& u_0=g \quad \text{on } \partial \dom,
\end{cases}
\end{equation}
and for $n\geqslant 1$,
\begin{equation}
\label{PDE:un}
\begin{cases}
&\Lo u_n=0 \quad \text{in } \dom,\\
& u_n(\bx)=\dfrac{\bigl(h(\bx)\bigr)^n}{n!}{\partial_{\bn}^n}g(\bx)
-\displaystyle\sum_{k=1}^n \dfrac{\bigl(h(\bx)\bigr)^k}{k!}{\partial_{\bn}^k}u_{n-k}(\bx) \quad \text{on } \partial \dom.
\end{cases}
\end{equation}
In particular, for  $n=1,2,3$, the above boundary conditions  on $\partial \dom$
are
\begin{align}
& u_1(\bx)=h(\bx)\partial_{\bn} g(\bx)-h(\bx)\partial_{\bn} u_0(\bx),\\
& u_2(\bx)=\dfrac{\bigl(h(\bx)\bigr)^2}{2}{\partial_{\bn}^2}g(\bx)-h(\bx)\partial_{\bn} u_1(\bx)
 -\dfrac{\bigl(h(\bx)\bigr)^2}{2}{\partial_{\bn}^2}u_{0}(\bx),
\label{eqn:w20z}\\
& u_3(\bx)=\dfrac{\bigl(h(\bx)\bigr)^3}6\partial_{\bn}^3 g(\bx)-h(\bx)\partial_{\bn} u_2(\bx)-\dfrac{\bigl(h(\bx)\bigr)^2}2\partial_{\bn}^2 u_1(\bx)-\dfrac{\bigl(h(\bx)\bigr)^3}6\partial_{\bn}^3 u_0(\bx).\label{eqn:w30z}
\end{align}
\begin{remark}
Using the shape calculus method, one may also derive a ``shape-Taylor expansion" of $u_\eps$ on any compact set $K\subset \dom\cap\dom_\eps$ (see \cite{Harbrecht2008} and the references therein),
\[
u_\eps(\bx)=u_0(\bx)+\eps \rd[\bU](\bx)+\frac{\eps^2}{2}\rd^2[\bU,\bU](\bx)+\Od(\eps^3),
\]
where $u_0$ is the solution to \eqref{PDE:u0},
$\rd[\bU]$ is the first order shape derivative on the boundary variation $\bU$, which is given by the Dirichlet problem
\[
\begin{cases}
&\Lo\rd[\bU]=0 \quad \text{in } \dom,\\
& \rd[\bU]=\bU\cdot\bn\partial_{\bn}(g-u_0) \quad \text{on } \partial \dom.
\end{cases}
\]
$\rd^2[\bU,\bU']$ is the second order shape derivative, i.e., the ``shape Hessian", on the pair $(\bU,\bU')$ of
boundary variations, which is given by the Dirichlet problem
\[
\begin{cases}
&\Lo\rd^2[\bU,\bU']=0 \quad \text{in } \dom,\\
& \rd^2[\bU,\bU']=\partial_{\bU}\partial_{\bU'}(g-u_0)-\partial_{\bU}\rd[\bU']-\partial_{\bU'}\rd[\bU] \quad \text{on } \partial \dom.
\end{cases}
\]
It is easy to see that when the boundary variation $\bU(\bx)$ is given by $\bU(\bx)=h(\bx)\bn(\bx)$ for $\bx\in\partial\dom$,
then $\rd[\bU]=u_1$ and $\rd^2[\bU,\bU]=2u_2$.
Therefore  the shape calculus method   produces the same result as our method.
\end{remark}

The right-hand side
of the boundary condition   \eqref{PDE:un} for each $u_n$
   involves the normal derivatives
of all lower order terms. The inverse   Lax-Wendroff procedure,
which is used to  construct high order numerical methods such as in \cite{Shu:2010},
 enables us to convert the high order normal derivatives
into the first order normal derivative and the  tangential derivatives
on the boundary $\partial\dom$. See Lemma \ref{lem:convert} below. This conversion  procedure
here seems only  {optional}
in theory, but as we shall show in Section \ref{sec:3},
for piecewisely smooth coefficients, this step is {\it essential} for the use of
transmission conditions on the   interface to {link} the
interior solution   and the exterior solution.

\begin{lemma}\label{lem:convert}
Let $u$ satisfy $\Lo u=f$ where $\Lo$ is the elliptic operator in \eqref{def:Lo}. Then
  all the
 normal derivatives $\partial_{\bn}^k u$ on a smooth surface $\Gamma$ with order $k\geq 2$ can be expressed
  in terms of the boundary $\Gamma$,
the restrictions of  the function $u$ and its normal derivative
$\partial_{\bn}u$ on $\Gamma$, and the coefficient functions $a^{ij}$, $b^i$, $c$, $i,j=1,\cdots, d$.
Therefore for every $k\geqslant 2$, every smooth surface $\Gamma$, every elliptic operator $\Lo$ and
every smooth function $f$, there exists an operator
$$F_{k,\Gamma,\Lo,f}[\cdot,\cdot]$$ acting on a pair of functions defined on $\Gamma$ such that
for any smooth function $u$ satisfying $\Lo u=f$, its $k$-th normal derivative $\partial_{\bn}^k u$
on $\Gamma$ is given by
$F_{k,\Gamma,\Lo,f}[u,\partial_{\bn}u]$.

In addition,
it is easy to see the following properties of the operator $F_{k,\Gamma,\Lo,f}[\cdot,\cdot]$ from the linearity of $\Lo$:
\[
F_{k,\Gamma,\Lo,f}[u,\partial_{\bn}u]+F_{k,\Gamma,\Lo,\varphi}[v,\partial_{\bn}v]=F_{k,\Gamma,\Lo,f+\varphi}[u+v,\partial_{\bn}u+\partial_{\bn}v],
\]
\[
cF_{k,\Gamma,\Lo,f}[u,\partial_{\bn}u]=F_{k,\Gamma,\Lo,cf}[cu,c\partial_{\bn}u], \quad\forall c\in\RR,
\]
where $u$ and $v$ solve $\Lo u=f$ and $\Lo v=\varphi$ respectively.
In particular, taking $c=0$ in the last equality yields $F_{k,\Gamma,\Lo,0}[0,0]=0$.

\end{lemma}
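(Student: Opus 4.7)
The plan is to proceed by induction on $k$, using the equation $\Lo u = f$ to reduce the $k$-th normal derivative to lower-order quantities that live on $\Gamma$.

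For the base case $k=2$, I would introduce a tubular neighbourhood of $\Gamma$ with local coordinates $(s,\btau)$, where $s$ is the signed normal distance and $\btau$ parametrizes $\Gamma$, so that $\partial_{\bn}|_\Gamma = \partial_s$. Every Cartesian derivative then splits as $\partial_{x_i} = n_i \partial_s + T_i$, where $T_i$ is a first-order tangential operator whose coefficients are smooth functions determined solely by $\Gamma$. Expanding the divergence form \eqref{def:Lo} and collecting the second-order terms, the coefficient of $\partial_s^2 u$ on $\Gamma$ equals $\sum_{i,j} a^{ij} n_i n_j$, which by \eqref{eqn:ellipcond} is strictly positive. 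Hence $\Lo u = f$ yields, on $\Gamma$,
\[
\Bigl(\sum_{i,j} a^{ij} n_i n_j\Bigr)\partial_s^2 u\big|_\Gamma = f\big|_\Gamma + \Phi_{2}\bigl(u|_\Gamma,\,\partial_{\bn}u|_\Gamma\bigr),
\]
where $\Phi_2$ depends only on $u|_\Gamma$, $\partial_{\bn}u|_\Gamma$, their tangential derivatives, and the coefficients $a^{ij},b^i,c$ and their tangential derivatives on $\Gamma$. Dividing out $\sum a^{ij} n_i n_j$ produces $F_{2,\Gamma,\Lo,f}[u|_\Gamma,\partial_{\bn}u|_\Gamma]$.

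For the inductive step, I assume the claim for orders $2,\ldots,k-1$ and apply $\partial_s^{k-2}$ to $\Lo u = f$ in the curvilinear frame before restricting to $s=0$. Since $\partial_s$ commutes with tangential differentiation up to smooth corrections coming from the geometry of $\Gamma$, and the coefficients of $\Lo$ are smooth, one obtains a relation of the form
\[
\Bigl(\sum_{i,j} a^{ij} n_i n_j\Bigr)\partial_s^k u\big|_\Gamma = \partial_s^{k-2} f\big|_\Gamma + \Phi_k\bigl(\partial_s^\ell u|_\Gamma,\ 0\leq \ell \leq k-1\bigr),
\]
where $\Phi_k$ is linear in each $\partial_s^\ell u|_\Gamma$ and their tangential derivatives, with coefficients determined by $\Gamma$ and the smooth coefficients of $\Lo$. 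The Cauchy data $u|_\Gamma$ and $\partial_{\bn}u|_\Gamma$ cover $\ell\in\{0,1\}$; for $\ell\in\{2,\ldots,k-1\}$ I substitute the inductive expressions $F_{\ell,\Gamma,\Lo,f}[u|_\Gamma,\partial_{\bn}u|_\Gamma]$. Ellipticity once again permits solving for $\partial_s^k u|_\Gamma$, which defines $F_{k,\Gamma,\Lo,f}[u|_\Gamma,\partial_{\bn}u|_\Gamma]$.

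The algebraic identities asserted for $F_{k,\Gamma,\Lo,f}$ then follow by inspection of the construction: at every stage of the induction the representation is linear in the pair of Cauchy data and in the source $f$ with coefficients independent of $u$ and $f$, so superposing two solutions $u,v$ with sources $f,\varphi$ yields the stated additivity, scalar multiplication is handled identically, and inserting $f=0$ with zero Cauchy data gives $F_{k,\Gamma,\Lo,0}[0,0]=0$. The main obstacle I expect is not conceptual but bookkeeping: tracking how $\partial_s$ commutes with tangential differentiation against the non-constant coefficients $a^{ij},b^i,c$ and verifying at every inductive step that the substitutions keep the final expression linear in $(u|_\Gamma,\partial_{\bn}u|_\Gamma)$. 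Since $\Lo$ is linear with smooth coefficients and $\sum a^{ij}n_in_j$ is nowhere vanishing on $\Gamma$, the recursion itself is unambiguously well-defined at every order.
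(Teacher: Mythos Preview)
Your argument is correct and is essentially the same as the paper's: the paper does not write out a proof but simply refers to Theorem~1 in Section~4.6 of Evans, where the noncharacteristic condition $\sum a^{ij}n_in_j\neq 0$ (guaranteed by ellipticity) is used to solve recursively for each $\partial_{\bn}^k u$ on $\Gamma$ in terms of the Cauchy data---exactly the induction you have sketched.
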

For the proof of this lemma,  refer to Theorem 1 in Section 4.6 of \cite{Evans:1998}.
The crucial assumption for the proof is the noncharacteristic condition of $\Gamma$,
 which is automatically guaranteed by the ellipticity of $\Lo$.
This lemma will be used later multiple times   and the dependency on $\Gamma$ and $\Lo$
in the notation of the mapping $F$ may be dropped out if they are self-explanatory.

With this notation $F$, the boundary condition for $u_n(n\geqslant 1)$ in  \eqref{PDE:un}  can be formally written as
\[
\begin{split}
u_n(\bx)= ~~& \dfrac{\bigl(h(\bx)\bigr)^n}{n!}{\partial_{\bn}^n}g(\bx)
-h(\bx)\partial_{\bn}u_{n-1}(\bx)
\\
&-\displaystyle\sum_{k=2}^n \dfrac{\bigl(h(\bx)\bigr)^k}{k!}F_{k,\partial\dom,\Lo,\delta_{k,n}f}[u_{n-k},\partial_{\bn}u_{n-k}](\bx).
\end{split}
\]

To demonstrate the above theory and show how the conversion of the higher order normal derivatives works, in Appendix \ref{sec:example},
we present two examples in 2D. The first is our motivating example of exciton diffusion
and the second is the Poisson equation.
Furthermore,    in Appendix \ref{sec:example}, we demonstrate  how to generalize our method to
 the Neumann boundary condition and the  reaction-diffusion equation with nonlinear terms.

\subsection{The partial sums}
\label{ssec:partialsum}

We have formally  derived the hierarchic systems of the boundary value problems for the
expansion terms $\set{u_n}$  in Section \ref{ssec:AA}.
We next derive the closed boundary value problems which  the partial sums
approximately satisfy. The procedure is   the same as in \cite{Dambrine2016SINUM}.
Define the partial sums
\[
v^{[n]}(\bx):=\sum_{k=0}^n \eps^ku_{k}(\bx), \quad n\geqslant 0.
\]
On the boundary $\partial \dom$, by using \eqref{PDE:un}, we have
\begin{equation}\label{eqn:vnbc}
\begin{split}
v^{[n]}(\bx)&=\sum_{k=0}^n \eps^ku_{k}(\bx)\\
&=\sum_{k=0}^n\frac{\eps^k\bigl(h(\bx)\bigr)^k}{k!}{\partial_{\bn}^k}g(\bx)-\sum_{k=0}^n\eps^k\sum_{j=1}^k \dfrac{\bigl(h(\bx)\bigr)^j}{j!}{\partial_{\bn}^j}u_{k-j}(\bx) \\
&=\sum_{k=0}^n\frac{\eps^k\bigl(h(\bx)\bigr)^k}{k!}{\partial_{\bn}^k}g(\bx)
-\sum_{j=1}^n\dfrac{\eps^j{\bigl(h(\bx)\bigr)^j}}{j!}{\partial_{\bn}^j}v^{[n-j]}(\bx).
\end{split}
\end{equation}
It is worth pointing out that the system of the boundary value problems for $v^{[n]}$
is defined recursively.  To obtain $v^{[n]}$, one needs to solve the boundary value problems from
$v^{[0]}$ (i.e., $u_0$) up to  $v^{[n-1]}$.  Thus, in total, $(n+1)$  Dirichlet boundary value problems  have to be solved.
However, it is possible to  directly solve one boundary value problem to obtain  the  approximation
with the same order as  $v^{[n]}$ by replacing the $v^{[n-j]}$ terms on the
right-hand side of   \eqref{eqn:vnbc} by $v^{[n]}$.
Then  one  obtains the following  closed boundary value problem,
whose solution is denoted by $u^{[n]}$:
\begin{equation} \label{eqn:UN}
\begin{cases}
&\Lo u^{[n]}=f \quad \text{in } \dom,\\
& \displaystyle\sum_{k=0}^n \dfrac{\eps^k\bigl(h(\bx)\bigr)^k}{k!}{\partial_{\bn}^k}u^{[n]}(\bx)
=\sum_{k=0}^n\frac{\eps^k\bigl(h(\bx)\bigr)^k}{k!}{\partial_{\bn}^k}g(\bx) \quad \text{on } \partial \dom.
\end{cases}
\end{equation}
In particular, the boundary value problems for $u^{[1]}$ and $u^{[2]}$ are
\[
\begin{cases}
&\Lo u^{[1]}=f \quad \text{in } \dom,\\
& u^{[1]}+ \eps h\partial_{\bn} u^{[1]}=g+ \eps h\partial_{\bn} g \quad \text{on } \partial \dom,
\end{cases}
\]
\[
\begin{cases}
&\Lo u^{[2]}=f \quad \text{in } \dom,\\
& u^{[2]}+ \eps h\partial_{\bn} u^{[2]}+\dfrac{\eps^2h^2}2{\partial_{\bn}^2}u^{[2]} =
g+ \eps h\partial_{\bn} g+\dfrac{\eps^2h^2}2{\partial_{\bn}^2}g \quad \text{on } \partial \dom.
\end{cases}
\]


The following theorem gives the approximation error of $v^{[n]}$, whose proof is given in Appendix \ref{sec:proof1}.

\begin{assumption}\label{asmp}
Assume $\dom\subset\dom_\eps\subset\dom_{\eps_0}$ and $\partial\dom\in C^{\infty}$.
Let the operator $\Lo$ given by \eqref{def:Lo} be strictly elliptic in $\dom_{\eps_0}$ and
have the coefficients $a^{ij}$, $b^{i}$, $c$ belong to $C^{\infty}(\overline{\dom_{\eps_0}})$
and $c\geq 0$. Also assume $f,g\in C^{\infty}(\overline{\dom_{\eps_0}})$ and $h\in C^{\infty}(\partial\dom)$.
\end{assumption}
\begin{theorem}\label{thm:error}
Under the Assumption \ref{asmp}, $\forall n, m\geq 0$,
\begin{equation}\label{eqn:vn err}
\bigl\| v^{[n]} -u_\eps \bigr\|_{H^m(\dom)} = \Od (\eps^{n+1}).
\end{equation}
\end{theorem}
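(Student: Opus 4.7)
The plan is to lift the comparison to the larger domain $\dom_\eps$, where both the true solution and a natural extension of $v^{[n]}$ coexist, reduce the problem to a small-data Dirichlet problem, and close via elliptic regularity. Concretely, first I would extend each $u_k$ $(0\leq k\leq n)$ off $\dom$ to $\dom_{\eps_0}$ by the Cauchy problem with data $(u_k,\partial_{\bn}u_k)|_{\partial \dom}$ and equation $\Lo u_k=\delta_{0,k}f$, following the construction of Section~\ref{subsubsec:extension}. Denote these extensions by $w_0,\dots,w_n$ and set $W^{[n]}:=\sum_{k=0}^n \eps^k w_k$, so that $W^{[n]}|_{\dom}=v^{[n]}$ and $\Lo W^{[n]}=f$ throughout $\dom_{\eps_0}$. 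Together with the extended solution $w_\eps$ this reduces the theorem to estimating the residual $R^{[n]}:=W^{[n]}-w_\eps$ in $H^m(\dom_\eps)$, since $w_\eps|_\dom=u_\eps|_\dom$ and $\dom\subset\dom_\eps$.

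The key computation is the boundary trace of $R^{[n]}$ on $\partial\dom_\eps$. From \eqref{eqn:wnPDE} and \eqref{eqn:wPDE} one immediately has $\Lo R^{[n]}=0$ in $\dom_\eps$. For the trace, apply Taylor's theorem with remainder to each smooth $w_k$ about $\bx\in\partial\dom$,
\[
w_k(\bx+\eps h(\bx)\bn(\bx))=\sum_{j=0}^{n-k}\frac{\eps^j h(\bx)^j}{j!}\partial_{\bn}^j w_k(\bx)+\Od(\eps^{n-k+1}),
\]
weight by $\eps^k$, sum over $k\in\{0,\dots,n\}$, and re-index with $m=k+j$; by the matching identity \eqref{eqn:wmbdc}, the resulting finite double sum collapses into $\sum_{m=0}^{n}\frac{(\eps h)^m}{m!}\partial_{\bn}^m g(\bx)$, which itself agrees with $g(\bx+\eps h(\bx)\bn(\bx))$ up to an $\Od(\eps^{n+1})$ Taylor tail. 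Hence $R^{[n]}|_{\partial\dom_\eps}=\Od(\eps^{n+1})$ in every $H^s(\partial\dom_\eps)$, with the implied constant controlled solely by Sobolev norms of the fixed smooth data $h$, $\bn$, $g$, and the finite collection $w_0,\dots,w_n$.

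The last step is standard elliptic regularity for $\Lo R^{[n]}=0$ in $\dom_\eps$ with Dirichlet trace of size $\Od(\eps^{n+1})$, which formally yields $\|R^{[n]}\|_{H^{m+1}(\dom_\eps)}\leq C(\eps)\cdot\Od(\eps^{n+1})$. The \emph{main obstacle} will be to ensure that $C(\eps)$ is bounded uniformly as $\eps\to 0$, since the domain $\dom_\eps$ itself varies with $\eps$. I would resolve this by pulling back to the fixed smooth reference $\dom$ via a diffeomorphism $\Phi_\eps:\dom_{\eps_0}\to\dom_{\eps_0}$ that coincides with $\bx\mapsto \bx+\eps h(\bx)\bn(\bx)$ on $\partial\dom$, maps $\dom$ onto $\dom_\eps$, and converges to the identity in $C^\infty(\overline{\dom_{\eps_0}})$ as $\eps\to 0$ (built by extending the boundary perturbation off $\partial\dom$ via a smooth cut-off in a tubular neighborhood). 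Under Assumption~\ref{asmp} the pulled-back operator $\Lo_\eps$ on $\dom$ is uniformly elliptic with $C^\infty(\overline{\dom})$ coefficients depending smoothly on $\eps\in[0,\eps_0]$, so standard elliptic regularity on the fixed smooth domain $\dom$ supplies an $\eps$-uniform constant. Undoing the change of variables then gives $\|v^{[n]}-u_\eps\|_{H^m(\dom)}\leq\|R^{[n]}\|_{H^{m+1}(\dom_\eps)}=\Od(\eps^{n+1})$, which is the claimed bound \eqref{eqn:vn err}.
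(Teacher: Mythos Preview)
Your argument is correct, but it follows a genuinely different route from the paper's own proof. The paper never leaves the fixed domain $\dom$: it Taylor-expands $u_\eps$ itself at $\partial\dom$ to obtain the identity
\[
v^{[n]}-u_\eps\big|_{\partial\dom}=\sum_{k=1}^n\frac{(\eps h)^k}{k!}\,\partial_{\bn}^k\bigl(u_\eps-v^{[n-k]}\bigr)+\Od(\eps^{n+1}),
\]
and then runs an induction on $n$, feeding the already-proved bounds $\|u_\eps-v^{[n-k]}\|_{H^{m'}(\dom)}=\Od(\eps^{n-k+1})$ back in through the trace inequality; a small lemma (elliptic regularity for $\Lo w=0$ on $\dom$ controlling $\|w\|_{H^m(\dom)}$ by $\|w\|_{H^m(\partial\dom)}$, using $c\geq 0$) closes the loop. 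Your approach, by contrast, extends each $u_k$ outward and computes the trace of $W^{[n]}-u_\eps$ directly on $\partial\dom_\eps$; the identity \eqref{eqn:wmbdc} collapses the double sum in one shot, so no induction is needed. The price you pay is the domain-mapping argument to make the elliptic constant uniform in $\eps$, which the paper sidesteps entirely by staying on $\dom$. Both are clean; the paper's version is shorter in apparatus, yours is more direct in the Taylor step. One small point: your final elliptic step implicitly uses that the homogeneous Dirichlet problem for $\Lo$ has only the trivial solution (so the a~priori estimate holds without a lower-order $\|R^{[n]}\|_{L^2}$ term on the right), which is exactly where the hypothesis $c\geq 0$ in Assumption~\ref{asmp} enters; it would be worth saying so explicitly.
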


The following approximation error of $u^{[n]}$ has been proved in \cite{Dambrine2016SINUM} for $n=0,1,2$,
\begin{equation*}\label{eqn:un err}
\bigl\| u^{[n]} -u_\eps \bigr\|_{H^1(\dom)} = \Od (\eps^{n+1}).
\end{equation*}
Note that although $u^{[n]}$ and $v^{[n]}$ have the same approximation order, there might still be a
considerable difference in the accuracy of their approximation errors due to the effects of the prefactors.
The numerical results in \cite{Dambrine2016SINUM} show that the approximation $u^{[n]}$ produces much less accurate results than $v^{[n]}$ for $n=1,2$.
This can be easily confirmed by  the following simple  one-dimensional example:
\[
\begin{cases}
&  u_\eps''=2 \quad \text{in } \dom_\eps=(0, ~1+\eps),\\
& u_\eps(0)=u_\eps(1+\eps)=0.
\end{cases}
\]
The true solution is $u_\eps(x)=x^2-(1+\eps)x$.
The equation for $u_0$ reads
\[
\begin{cases}
&  u_0''=2 \quad \text{in } \dom=(0,~1),\\
& u_0(0)=u_0(1)=0,
\end{cases}
\]
with the solution $u_0(x)=x^2-x$.
Then the equation for $u_1$ is
\[
\begin{cases}
&  u_1''=0 \quad \text{in } \dom=(0,~1),\\
& u_1(0)=0, \quad u_1(1)=-u_0'(1)=-1.
\end{cases}
\]
So $u_1(x)=-x$, and then the partial sum
$v^{[1]}(x)=u_0(x)+\eps u_1(x)=x^2-x-\eps x$.
Hence
\[
v^{[1]}(x) - u_\eps(x)=0.
\]
The   equation for $u^{[1]}$  is
\[
\begin{cases}
&  \bigl(u^{[1]}\bigr)''=2 \quad \text{in } \dom=(0,~1),\\
& u^{[1]}(0)=0, \quad u^{[1]}(1)+\eps \bigl(u^{[1]}\bigr)'(1)=0.
\end{cases}
\]
We find $u^{[1]}(x)=x^2-\dfrac{1+2\eps}{1+\eps}x$, which is a worse approximation
than $v^{[1]}$ since
$$u^{[1]}(x)-u_\eps(x)=\dfrac{\eps^2}{1+\eps}x =\Od(\eps^2).$$
To attain the zero error as  $v^{[1]}$, one needs
 to proceed to the next order $u^{[2]}$ by solving
\[
\begin{cases}
&  \bigl(u^{[2]}\bigr)''=2 \quad \text{in } \dom=(0,~1),\\
& u^{[2]}(0)=0, \quad u^{[2]}(1)+\eps \bigl(u^{[2]}\bigr)'(1)+\eps^2=0.
\end{cases}
\]
It turns out   $u^{[2]}(x)=v^{[1]}(x)=x^2-x-\eps x$.

\section{The thin layer problem}
\label{sec:3}
Next, we generalize the  above method from the continuous material coefficients $\tensor{a}(\bx)$
to the transmission problem associated with the  piecewisely smooth coefficients.
The Taylor expansion  used in Section \ref{ssec:AA}  is still applicable
since we essentially apply the expansion on each  subdomain where $\tensor{a}$
is smooth.
The  next   step is to
use Lemma
\ref{lem:convert} (the inverse Lax-Wendroff procedure) to convert
the high order normal derivatives on the interface   to the first order normal
derivative and  the tangential derivatives.
This critical step facilitates
 the transmission condition
given on the interface   to  build the connection between the solutions
on each subdomain.

For ease of exposition, we only deal with the outward perturbation where $h(\bx)\geqslant0$
for all $\bx\in \partial \dom$. So $\dom$ is a (proper) subset of $\dom_\eps$ and
the difference $\dom_\eps\setminus\overline{\dom}$ is the thin layer $L_\eps$.
 The transmission condition is thus imposed on
 $$\Gamma:=\set{\bx\in\partial\dom:h(\bx)>0}\subset\partial\dom.$$
 Note that $\overline{\Gamma}=\partial\dom\cap\partial L_\eps$ and $\partial\dom\setminus\Gamma=\set{\bx\in\partial\dom : h(\bx)=0}=\partial\dom\cap\partial\dom_\eps$.
Assume that the second order coefficients $a^{ij}$, $i,j=1,\cdots, d$,
are piecewisely smooth and have jumps only across the transmission interface $\Gamma$.
In addition, the term $f$ on the right-hand side of the equation is also allowed
(but not necessarily) to have jumps on $\Gamma$.
Specifically, we assume for $i,j=1,\cdots, d$,
\[
a^{ij}(\bx)=\begin{cases}
& a^{ij}_{\rin}(\bx)  \quad \text{for } \bx\in{\dom}\\
& a^{ij}_{\ext}(\bx) \quad \text{for } \bx\in L_{\eps_0}
\end{cases}
,
\quad
f(\bx)=\begin{cases}
& f_{\rin}(\bx)  \quad \text{for } \bx\in{\dom}\\
& f_{\ext}(\bx)  \quad \text{for } \bx\in L_{\eps_0}
\end{cases}
,
\]
where $a^{ij}_\rin$ and $f_\rin$ are smooth functions on $\overline{\dom}$ while $a^{ij}_\ext$ and $f_\ext$ smooth on $\overline{L_{\eps_0}}$, and in general,
$a^{ij}_\rin(\bx)\neq a^{ij}_\ext(\bx)$ for $\bx\in\Gamma$.

Write
\[
u(\bx)=\begin{cases}
& u_{\rin}(\bx)  \quad \text{for } \bx\in{\dom},\\
& u_{\ext}(\bx)  \quad \text{for } \bx\in L_\eps,
\end{cases}
\]
then the  transmission problem of our concern takes the form:
\begin{equation}\label{eqn:transmissionPDE}
\begin{cases}
& \Lo u_\rin=f_\rin  \quad \text{in } {\dom},\\
& \Lo  u_\ext=f_\ext  \quad \text{in } {L_\eps},\\
& u_\rin=u_\ext, \quad \displaystyle\sum_{i,j=1}^d a^{ij}_\rin n_i\partial_{x_j} u_\rin= \sum_{i,j=1}^d a^{ij}_\ext n_i\partial_{x_j} u_\ext  \quad \text{on } {\Gamma}, \\
& u_\rin=g\quad \text{on } {\partial\dom_\eps\cap\partial\dom},\\
& u_\ext=g \quad \text{on } {\partial\dom_\eps}\cap\partial L_\eps.
\end{cases}
\end{equation}

\subsection{Asymptotic expansions in $\dom$ and $L_\eps$}
Conceptually, we may first extend the domain of $u_\ext$ to a fixed larger domain $L_{\eps_0}$,
as in Section \ref{subsubsec:extension}, and for simplicity we still use $u_\ext$ for its extension.
Assume the following two ans\"{a}tze for $u_\rin$ and $u_\ext$ respectively:
\begin{align}
&u_{\rin}(\bx)=\displaystyle\sum_{n=0}^\infty\eps^n u_{\rin,n}(\bx) \quad \text{for }\bx\in \dom,\label{eqn:uintans}\\
&u_{\ext}(\bx)=\displaystyle\sum_{n=0}^\infty\eps^n u_{\ext,n}(\bx) \quad \text{for }\bx\in L_{\eps_0}.\label{eqn:uextans}
\end{align}
Plug these ans\"{a}tze into  \eqref{eqn:transmissionPDE}, and
match the terms at the same order of $\eps$,   then we obtain the following equations for $u_{\rin,n}$ and $u_{\ext,n}$,
\[
\Lo u_{\rin,n}=\delta_{0,n}f_\rin ~~ \text{in } \dom,
~~ \mbox{ and } ~\Lo u_{\ext,n}=\delta_{0,n}f_\ext  ~~ \text{in } L_{\eps_0},
\]
and the transmission conditions on $\Gamma$ for $u_{\rin,n}$ and $u_{\ext,n}$,
\begin{align}
& u_{\rin,n}=u_{\ext,n}, \label{eqn:trancondun}\\
& \sum_{i,j=1}^d a^{ij}_\rin n_i\partial_{x_j} u_{\rin,n}= \sum_{i,j=1}^d a^{ij}_\ext n_i\partial_{x_j} u_{\ext,n}.\label{eqn:tranconddnun}
\end{align}
The boundary conditions on $\partial\dom_\eps\cap\partial\dom$ for $u_{\rin,n}$
is $
u_{\rin,n}=\delta_{0,n}g$
and   $u_{\ext,n}$ share the same condition  on $\partial\dom_\eps \cap\partial L_\eps$.

Our goal is to derive  the correct boundary conditions on $\partial\dom$ for $u_{\rin,n}$.
Note that we already have these conditions on $\partial\dom\cap\partial\dom_\eps$,
thus it remains to find the boundary conditions on $\Gamma$ for $u_{\rin,n}$.
To this end, we actually  first derive the boundary conditions on $\Gamma$ for $u_{\ext,n}$,
and then convert $u_{\ext,n}$ to $u_{\rin,n}$ by the transmission conditions
\eqref{eqn:trancondun} and \eqref{eqn:tranconddnun}.

To  work on the exterior solution $u_\ext$, which behaves nicely
in $L_{\eps_0}$,
we apply  the Taylor expansion method used in Section \ref{sssec:aymp} to the ans\"{a}tz \eqref{eqn:uextans} with the boundary condition $u_\ext=g$ on $\partial\dom_\eps\cap \partial L_\eps$.
The obtained result is
the following recursive expression of the boundary conditions on $\Gamma$ for $u_{\ext,n}$:
\begin{equation}\label{eqn:uextbdc}
\begin{cases}
u_{\ext,0} = g, \\
\begin{split}
u_{\ext,n}&=\dfrac{h^n}{n!}{\partial_{\bn}^n}g-\displaystyle\sum_{k=1}^n \dfrac{h^k}{k!}{\partial_{\bn}^k}u_{\ext,n-k}\\
&=\dfrac{h^n}{n!}{\partial_{\bn}^n}g-h\partial_{\bn}u_{\ext,n-1}
\\
&~~~\qquad  -\sum_{k=2}^n \dfrac{h^k}{k!}F_{k,\delta_{k,n}f_\ext}[u_{\ext,n-k},\partial_{\bn}u_{\ext,n-k}],
~~~\forall n\geq 1,
\end{split}
\end{cases}
\end{equation}
where the operator $F_{k,f}$ is the operator $F_{k,\Gamma,\Lo,f}$   introduced in
Lemma \ref{lem:convert} and the subindices $\Gamma$ and $\Lo$ are dropped for simplicity.

To handle the terms $\partial_{\bn}u_{\ext,n-k}$  on the right-hand side of \eqref{eqn:uextbdc},
we need the following lemma, proven in Appendix \ref{sec:proof2}.

\begin{lemma}\label{lem:dnuetodnui}
For any integer $n\geq 0$ and any $\bx \in \Gamma$,
one can uniquely determine the value of the
normal derivative   $\partial_{\bn}u_{\ext,n}(\bx)$ on $\Gamma$
from the information of $u_{\rin,n}$
by using \eqref{eqn:trancondun} and \eqref{eqn:tranconddnun}.
More precisely,  $\partial_{\bn}u_{\ext,n}(\bx)$ for $\bx\in\Gamma$ only depends on
\begin{itemize}
\item
the normal vector $\bn=\bn(\bx)$ and
\item
the value of  $u_{\rin,n}(\bx')$for all $\bx' \in \Gamma$   and
\item    $\partial_{\bn}u_{\rin,n}(\bx)$  and
\item
the second order coefficients
$a^{ij}_\rin(\bx)$ and $a^{ij}_\ext(\bx)$, $i, j=1,\cdots, d$.
\end{itemize}
\end{lemma}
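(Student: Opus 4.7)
My plan is to exploit the fact that the transmission conditions \eqref{eqn:trancondun} and \eqref{eqn:tranconddnun} together constitute enough information to pin down the full gradient $\nabla u_{\ext,n}$ on $\Gamma$, once $u_{\rin,n}$ together with $\partial_{\bn} u_{\rin,n}$ is known. The key observation is the classical decomposition
\[
\nabla u = (\partial_{\bn}u)\,\bn + \nabla_{T}u \qquad \text{on } \Gamma,
\]
where $\nabla_T u$ denotes the tangential gradient. Tangential derivatives of a function on $\Gamma$ depend only on the trace of that function on $\Gamma$, so the continuity condition \eqref{eqn:trancondun} immediately gives $\nabla_T u_{\ext,n} = \nabla_T u_{\rin,n}$ at every point of $\Gamma$. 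This already reduces the problem to determining the single scalar $\partial_{\bn}u_{\ext,n}(\bx)$.

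First, I would rewrite the conormal jump condition \eqref{eqn:tranconddnun} as $\bn^{T} A_\ext \nabla u_{\ext,n} = \bn^{T} A_\rin \nabla u_{\rin,n}$, where $A_\ext = (a^{ij}_\ext)$ and $A_\rin=(a^{ij}_\rin)$. Inserting the decomposition above and using $\nabla_T u_{\ext,n} = \nabla_T u_{\rin,n}$ yields
\[
\bigl(\bn^{T} A_\ext \bn\bigr)\,\partial_{\bn} u_{\ext,n}
= \bigl(\bn^{T} A_\rin \bn\bigr)\,\partial_{\bn} u_{\rin,n}
+ \bn^{T}(A_\rin - A_\ext)\,\nabla_T u_{\rin,n}.
\]
The coefficient on the left is nonzero by the ellipticity assumption \eqref{eqn:ellipcond}, so the equation can be solved uniquely:
\[
\partial_{\bn} u_{\ext,n}(\bx)
= \frac{(\bn^{T} A_\rin \bn)\,\partial_{\bn} u_{\rin,n} + \bn^{T}(A_\rin - A_\ext)\,\nabla_T u_{\rin,n}}{\bn^{T} A_\ext \bn}\bigg|_{\bx}.
\]
This formula displays exactly the dependence asserted in the lemma: on $\bn(\bx)$, on $\partial_{\bn}u_{\rin,n}(\bx)$, on the tangential derivatives of $u_{\rin,n}$ at $\bx$ (which is what is meant by the values of $u_{\rin,n}(\bx')$ for $\bx'\in\Gamma$ in a neighborhood of $\bx$), and on the entries of $A_\rin(\bx)$ and $A_\ext(\bx)$.

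There is no serious obstacle here; the content is an elementary linear-algebra manipulation. The only minor point to handle carefully is the ellipticity-based nonvanishing of $\bn^{T} A_\ext \bn$, which I would state explicitly using \eqref{eqn:ellipcond} with $\xi = \bn(\bx)$. A small expository remark worth adding is that the tangential gradient $\nabla_T u_{\rin,n}(\bx)$ depends only on the restriction of $u_{\rin,n}$ to $\Gamma$ in an arbitrarily small neighborhood of $\bx$, which is why the statement of the lemma lists $u_{\rin,n}(\bx')$ for all $\bx'\in\Gamma$ as an input (rather than the pointwise value at $\bx$ alone).
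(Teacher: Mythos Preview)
Your proposal is correct and follows essentially the same approach as the paper: decompose the gradient into normal and tangential parts, use the continuity condition \eqref{eqn:trancondun} to identify the tangential parts, substitute into the conormal condition \eqref{eqn:tranconddnun}, and solve for $\partial_{\bn}u_{\ext,n}$ using ellipticity to ensure the coefficient $\bn^{T}A_\ext\bn$ is nonzero. The paper writes the same computation in index notation with explicit tangent vectors $\btau_j=\vec e_j-n_j\bn$ (so that $\partial_{\btau_j}u=(\nabla_T u)_j$), arriving at the identical formula you obtained.
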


Now  the transmission conditions \eqref{eqn:trancondun} and \eqref{eqn:tranconddnun},
serve the bridge from $u_{\ext,n}$ to $u_{\rin,n}$, with the aid of  Lemma \ref{lem:dnuetodnui}.
Then the calculation following the procedure in  the proof of  Lemma \ref{lem:dnuetodnui} shows
that   \eqref{eqn:uextbdc} leads to the following final results
for the boundary condition of $\set{u_{\rin,n}}$ on $\Gamma$:
\begin{equation}\label{eqn:urinBC}
\begin{cases}
u_{\rin,0} = g, \\
\begin{split}
u_{\rin,n}
&=\dfrac{h^n}{n!}{\partial_{\bn}^n}g-h q_1
-\sum_{k=2}^n \dfrac{h^k}{k!}F_{k,\delta_{k,n}f_\ext}\left[u_{\rin,n-k}, q_k\right],
\forall n\geq 1,
\end{split}
\end{cases}
\end{equation}
where for any $ \bx \in \partial \dom$,
\[ q_k(\bx):=
\frac{Q_\rin(\bn)\partial_{\bn} u_{\rin,n-k}+
\sum_{i,j=1}^d (a^{ij}_\rin-a^{ij}_\ext) n_i\partial_{\btau_j} u_{\rin,n-k}}{Q_\ext(\bn)},
\]
\[
Q_\ext(\bn):=\sum_{i,j=1}^d a^{ij}_\ext n_in_j,  \quad Q_\rin(\bn):=\sum_{i,j=1}^d a^{ij}_\rin n_in_j,
\]

\begin{remark}
Since we have $h=0$ and $u_{\rin,n}=\delta_{0,n}g$ on the boundary $\partial\dom\setminus\Gamma$,
the boundary conditions \eqref{eqn:urinBC}   also holds on $\partial\dom\setminus\Gamma$ and thus on the whole boundary
$\partial\dom$.
\end{remark}

As an illuminating example, let us consider the   elliptic operator $\Lo=-\nabla\cdot\bigl(\sigma(\bx)\nabla\bigr)$ with a discontinuous
 $\sigma(\bx)$, which has been studied in Example \ref{ex:Laplace} when
   $\sigma(\bx)$ is a smooth function.
\begin{example}\label{ex:Laplacetrans}
Set $g=0$ and $\Lo=-\nabla\cdot\bigl(\sigma(\bx)\nabla\bigr)$.
Assume
\begin{equation}\label{eqn:sigma}
\sigma(\bx)=\begin{cases}
& \sigma_{\rin}(\bx)  \quad \text{for } \bx\in{\dom},\\
& \sigma_{\ext}(\bx)  \quad \text{for } \bx\in L_{\eps_0},
\end{cases}
\end{equation}
where $\sigma_\rin$ and $\sigma_\ext$ are smooth functions on $\overline{\dom}$ and $\overline{L_{\eps_0}}$ respectively, and in general,
they are distinct on the common boundary.
To ensure the ellipticity of $\Lo$, we assume that $\sigma_\rin$ and $\sigma_\ext$ are both positive everywhere in their domain.
Then the transmission condition \eqref{eqn:tranconddnun} reads
\[
\sigma_\rin\partial_{\bn}u_{\rin,n}=\sigma_\ext\partial_{\bn}u_{\ext,n}.
\]
Thus we deduce
\begin{equation}\label{eqn:dnuetodnui}
\partial_{\bn}u_{\ext,n}=\frac{\sigma_\rin\partial_{\bn}u_{\rin,n}}{\sigma_{\ext}}.
\end{equation}
Next, we compute explicitly the boundary conditions on $\Gamma$ for the first three orders $u_{\rin,n}$.

Order $n=0$. The boundary condition \eqref{eqn:urinBC}  on $\Gamma$ for $u_{\rin,0}$ is simply
\begin{equation}\label{eqn:ui0bdc}
u_{\rin,0}=0.
\end{equation}

Order $n=1$. The boundary condition \eqref{eqn:uextbdc} on $\Gamma$ for $u_{\ext,1}$ reads
\[
u_{\ext,1}(\bx)=-h(\bx){\partial_{\bn}}u_{\ext,0}(\bx).
\]
Then by \eqref{eqn:trancondun} and \eqref{eqn:dnuetodnui}, we obtain the boundary condition on $\Gamma$ for $u_{\rin,1}$:
\begin{equation}\label{eqn:ui1bdc}
u_{\rin,1}=-\frac{h\sigma_\rin}{\sigma_\ext}{\partial_{\bn}}u_{\rin,0}.
\end{equation}

Order $n=2$. Applying  the   boundary condition \eqref{eqn:u2bdcr} for $u_2$  in Example \ref{ex:Laplace} to  $u_{\ext,2}$ yields
\[
u_{\ext,2}=-h\partial_{\bn}u_{\ext,1}+\frac{h^2}{2}\Bigl(\frac{\partial_{\bn}\sigma_\ext}{\sigma_\ext}+
\kappa\Bigr)\partial_{\bn}u_{\ext,0}+\dfrac{h^2f_\ext}{2\sigma_\ext}.
\]
$\kappa$ is the curvature of $\partial \dom$, defined in Example \ref{ex:Laplace}.
Then substituting \eqref{eqn:trancondun} and \eqref{eqn:dnuetodnui} into the last equation gives
the boundary condition on $\Gamma$ for $u_{\rin,2}$:
\begin{equation}\label{eqn:ui2bdc}
u_{\rin,2}=-\frac{h\sigma_\rin}{\sigma_{\ext}}{\partial_{\bn}}u_{\rin,1}+\frac{h^2\sigma_\rin}{2\sigma_\ext}\Bigl(\frac{\partial_{\bn}\sigma_\ext}{\sigma_\ext}+
\kappa\Bigr)\partial_{\bn}u_{\rin,0}+\dfrac{h^2f_\ext}{2\sigma_\ext}.
\end{equation}
\end{example}

\subsection{The approximate boundary conditions for the partial sums}
Define the partial sums
\begin{equation*}
v^{[n]}(\bx)=
\begin{cases}
&v^{[n]}_{\rin}(\bx):=\displaystyle\sum_{k=0}^n\eps^k u_{\rin,k}(\bx) \quad \text{for }\bx\in \dom,\\
&v^{[n]}_{\ext}(\bx):=\displaystyle\sum_{k=0}^n\eps^k u_{\ext,k}(\bx) \quad \text{for }\bx\in L_{\eps_0}.\\
\end{cases}
\end{equation*}
As in Section \ref{ssec:partialsum},
the goal here is to derive the recursive boundary condition
for the partial sums and to find the closed boundary value problems for the approximations $u^{[n]}$.

To derive the boundary conditions that the partial sums $v^{[n]}_{\rin}$ satisfy, we have two equivalent approaches.
The first
one is to directly derive the boundary conditions for $v^{[n]}_{\rin}$ from  the boundary conditions for $u_{\rin,n}$ which are already obtained above;
the second approach is to apply \eqref{eqn:vnbc} to $v^{[n]}_\ext$ and then transfer to $v^{[n]}_\rin$ via the following transmission conditions
\begin{align*}
& v^{[n]}_{\rin}=v^{[n]}_{\ext}, \\
& \sum_{i,j=1}^d a^{ij}_\rin n_i\partial_{x_j} v^{[n]}_{\rin}= \sum_{i,j=1}^d a^{ij}_\ext n_i\partial_{x_j} v^{[n]}_{\ext},
\end{align*}
which can be easily deduced from \eqref{eqn:trancondun} and \eqref{eqn:tranconddnun}.
Let us  continue to work on  Example \ref{ex:Laplacetrans}
to  illustrate the first approach.

{\it Order $n=1$.}  On the boundary $\partial\dom$, \eqref{eqn:ui0bdc} and \eqref{eqn:ui1bdc} give
\begin{equation}\label{eqn:v[1]bdc}
v^{[1]}_\rin=-\eps\frac{h\sigma_\rin}{\sigma_\ext}{\partial_{\bn}}u_{\rin,0}=-\frac{\eps h\sigma_\rin}{\sigma_\ext} {\partial_{\bn}}v_{\rin}^{[0]}.
\end{equation}
Then we are motivated to introduce the following Robin boundary value problem for $u^{[1]}$:
\begin{equation}\label{eqn:u[1]PDE}
\begin{cases}
& \Lo u^{[1]}=f  \quad \text{in } \dom,\\
& u^{[1]}+\dfrac{\eps h\sigma_\rin}{\sigma_\ext} {\partial_{\bn}}u^{[1]}=0 \quad \text{on } \partial\dom.
\end{cases}
\end{equation}

{\it Order $n=2$}. On the boundary $\partial\dom$, \eqref{eqn:ui2bdc} and \eqref{eqn:v[1]bdc} show
\[
\begin{split}
v^{[2]}_\rin&=-\frac{\eps h\sigma_\rin}{\sigma_\ext} {\partial_{\bn}}v_{\rin}^{[0]}-\frac{\eps^2h\sigma_\rin}{\sigma_{\ext}}{\partial_{\bn}}u_{\rin,1}
+\frac{\eps^2h^2\sigma_\rin}{2\sigma_\ext}\Bigl(\frac{\partial_{\bn}\sigma_\ext}{\sigma_\ext}+
\kappa\Bigr)\partial_{\bn}u_{\rin,0}+\dfrac{\eps^2h^2f_\ext}{2\sigma_\ext}\\
&=-\frac{\eps h\sigma_\rin}{\sigma_\ext} {\partial_{\bn}}v_{\rin}^{[1]}
+\frac{\eps^2h^2\sigma_\rin}{2\sigma_\ext}\Bigl(\frac{\partial_{\bn}\sigma_\ext}{\sigma_\ext}+
\kappa\Bigr)\partial_{\bn}v^{[0]}_{\rin}+\dfrac{\eps^2h^2f_\ext}{2\sigma_\ext},
\end{split}
\]
thus the closed Robin boundary value problem for $u^{[2]}$ can be imposed as:
\begin{equation}\label{eqn:u[2]PDE}
\begin{cases}
& \Lo u^{[2]}=f  \quad \text{in } \dom,\\
& u^{[2]}+\dfrac{\eps h\sigma_\rin}{\sigma_\ext} {\partial_{\bn}}u^{[2]}
-\dfrac{\eps^2h^2\sigma_\rin}{2\sigma_\ext}\Bigl(\dfrac{\partial_{\bn}\sigma_\ext}{\sigma_\ext}+
\kappa\Bigr)\partial_{\bn}u^{[2]}=\dfrac{\eps^2h^2f_\ext}{2\sigma_\ext} \quad \text{on } \partial\dom.
\end{cases}
\end{equation}

To compare with the results derived  in \cite{Dambrine2016SINUM}
where the coefficient $\sigma$ is piecewise constant, we set  $\sigma_\rin=\sigma_0$ and $\sigma_\ext=1$
and  $h(\bx)>0$ for all $\bx\in\partial\dom$.
Then \eqref{eqn:u[1]PDE} becomes
\[
\begin{cases}
& \Lo u^{[1]}=f  \quad \text{in } \dom,\\
& u^{[1]}+{\eps h\sigma_0} {\partial_{\bn}}u^{[1]}=0 \quad \text{on } \partial\dom,
\end{cases}
\]
which is the same as that in  \cite{Dambrine2016SINUM}.
The  equation    \eqref{eqn:u[2]PDE} becomes
\[
\begin{cases}
& \Lo u^{[2]}=f  \quad \text{in } \dom,\\
& u^{[2]}+{\eps h\sigma_0}\Bigl(1-\dfrac{\eps \kappa h}{2}\Bigr) {\partial_{\bn}}u^{[2]}
=\dfrac{\eps^2h^2f}{2} \quad \text{on } \partial\dom.
\end{cases}
\]
Multiplying the boundary condition for $u^{[2]}$ by $(1+\frac{\eps \kappa h}{2})$ yields
\[
\Bigl(1+\frac{\eps \kappa h}{2}\Bigr)u^{[2]}+{\eps h\sigma_0}\Bigl(1-\dfrac{\eps^2 \kappa^2 h^2}{2}\Bigr) {\partial_{\bn}}u^{[2]}
=\Bigl(1+\frac{\eps \kappa h}{2}\Bigr)\dfrac{\eps^2h^2f}{2},
\]
that is,
\[
\Bigl(1+\frac{\eps \kappa h}{2}\Bigr)u^{[2]}+{\eps h\sigma_0} {\partial_{\bn}}u^{[2]}
=\dfrac{\eps^2h^2f}{2}+ \Od(\eps^3).
\]
By neglecting the third order term $\Od(\eps^3)$,
we have
the same equation in \cite{Dambrine2016SINUM}  for $u^{[2]}$.

\section{The thin layer problem with high-contrast ratio}
\label{sec:4}
From this section, we take into account of the contrast ratio
parameter $\sigma$ together with the geometric perturbation
parameter $\eps$.
This section   considers the following transmission problem on $\dom_\eps$:
\begin{equation}\label{eqn:transPDE2para}
\begin{cases}
& -\Delta u_\rin=f_\rin  \qquad \text{in } {\dom},\\
& -\sigma\Delta  u_\ext=f_\ext \quad \text{in } {L_\eps},\\
& u_\rin=u_\ext \quad \displaystyle\partial_{\bn} u_\rin= \sigma\partial_{\bn} u_\ext,  \quad \text{on } {\Gamma}, \\
& u_\rin=g \quad \text{on } \partial\dom_\eps\cap\partial\dom,\\
& u_\ext=g \quad \text{on } {\partial\dom_\eps\cap \partial L_\eps},
\end{cases}
\end{equation}
where $\sigma$ is  a positive constant.
The geometry of the domains are exactly the same as in Section \ref{sec:3}, i.e.,
$\dom\subset\dom_\eps$ and $L_\eps = \dom_\eps\setminus \overline{\dom}$.
$\Gamma$ is the interface separating two materials with different conductivity.
A large  $\sigma$ means a large conductivity in the thin layer $L_\eps$
and a small   $\sigma$ means a (relatively) large conductivity in the interior $\dom$.

We want to investigate the limiting behavior, as well as the asymptotic expansions,  of the interior solution $u_\rin$
as $\eps\rightarrow 0$ and $\sigma\rightarrow 0$ or $ \sigma\rightarrow \infty$.   Before we present the abstract analysis,
let us first
heuristically
show how three scaling regimens can appear by  considering  a simple 1D example.
\begin{example}\label{ex:1d2para}
Let $\dom=(0,~1)$, $\dom_\eps=(-\eps h_0,~1+\eps h_1)$ with two numbers $h_0, h_1\geq 0$, and take $f_\rin=f_\ext=-2$ and $g=1$.
Then it is easy to find the interior solution is
\begin{equation}\label{eqn:1dsol}
\begin{split}
u_\rin(x)=&x^2-Ax+B-
\dfrac{h_0h_1\eps}{\sigma}C,
\end{split}
\end{equation}
and the exterior  solution is
\begin{equation}\label{eqn:1dextsol}
u_\ext(x)=
\begin{cases}
\dfrac{x^2-Ax}\sigma+ B-\dfrac{h_0h_1\eps}\sigma C,
& -h_0\eps \leq x \leq 0, \vspace{2\jot}\\
\dfrac{x^2-Ax+A-1}\sigma+(1-A+B)-\dfrac{h_0h_1\eps}{\sigma}C,
& 1\leq x\leq 1+h_1\eps,
\end{cases}
\end{equation}
where
\[
A=\dfrac{\sigma+2h_1\eps+(h_1^2-h_0^2)\eps^2}{\sigma+(h_0+h_1)\eps},\quad
B=\dfrac{\sigma+h_1\eps-h_0^2\eps^2}{\sigma+(h_0+h_1)\eps}, \quad C=\dfrac{2\eps+(h_0+h_1)\eps^2}{\sigma+(h_0+h_1)\eps}.
\]
\end{example}

The limiting behavior of the interior solution  \eqref{eqn:1dsol} and exterior solution \eqref{eqn:1dextsol}  for this   example is different in the following three cases \begin{itemize}
\item[(i)] $\eps/\sigma\rightarrow 0$,
\item[(ii)] $\sigma/\eps\rightarrow 0$,
\item[(iii)] $\eps/\sigma\rightarrow c$, where $c\in(0,~\infty)$.
\end{itemize}

In Case (i), as $\eps$ and $\mu:=\eps/\sigma$ tend to 0,
we have the interior solution \eqref{eqn:1dsol}
$
u_\rin(x)
\rightarrow x^2-x+1 \sim \Od(1),
$
 and the exterior solution $
 u_\ext(x)
\sim \Od(\sigma^{-1})+\Od(1)+\Od(\mu)=\Od(\mu/\eps)+\Od(1).
 $

In Case (ii), introduce $\lambda:=\sigma/\eps$, then both $\eps$ and $\lambda$ go to 0,
If $h_0h_1>0$, i.e.,  the domain perturbation is applied to the whole boundary $\partial\dom$, then
$u_\rin$ is at the order $\Od(\lambda^{-1})$; otherwise, one has $h_0=0$ or $h_1=0$, and so
$
u_\rin(x) \sim \Od(1)
$.
In both circumstances, $u_{\ext}$ is at the order $\Od(\sigma^{-1})=\Od(\eps^{-1}\lambda^{-1})$.

In Case (iii), as $\eps\rightarrow0$ and $\eps/\sigma\rightarrow c$,
$u_\rin(x)\sim \Od(1) $ and $u_{\ext}$ is at the order $\Od(\sigma^{-1})=\Od(\eps^{-1})$.

For general problems, the scalings of the magnitudes of $u_\rin$ and $u_\ext$ behave  exactly the same
as in the above  example. In the next,
we develop the two-parameter  asymptotic analysis for  the general transmission problem \eqref{eqn:transPDE2para} by  discussing the above three cases.
The results we obtained below  are written recursively up to any order in an abstract way.
The  readers can find explicit boundary conditions and solvability conditions for some lower order terms for
each case in Appendix \ref{sec:case}.

\subsection{Case (i): $\eps/\sigma
\rightarrow 0$, $\eps\rightarrow 0$.}

We   now treat $\eps$ and $$\mu=\eps/\sigma$$ as independent small parameters.
Introduce the rescaled exterior solution
$\tu_\ext=\eps u_\ext,$
then rewrite the original equation \eqref{eqn:transPDE2para} in terms of $u_\rin$ and $\tu_\ext$:
\begin{equation}\label{eqn:transPDE2parac1}
\begin{cases}
& -\Delta u_\rin=f_\rin  \quad \text{in } {\dom},\\
& -\Delta  \tu_\ext=\mu f_\ext  \quad \text{in } {L_\eps},\\
& \tu_\ext=\eps u_\rin, \quad \displaystyle \partial_{\bn} \tu_\ext= \mu\partial_{\bn} u_\rin  \quad \text{on } {\Gamma}, \\
& u_\rin=g \quad \text{on } \partial\dom_\eps\cap\partial\dom,\\
& \tu_\ext=\eps g \quad \text{on } {\partial\dom_\eps\cap \partial L_\eps}.
\end{cases}
\end{equation}
Assume
$u_\rin$ and $\tu_\ext$  have double asymptotic expansions
\[
\begin{split}
u_\rin(\bx)&=\sum_{m,n=0}^\infty u_{\rin,m,n}(\bx)\eps^m\mu^n,  \quad \bx\in\dom,
\\
\tu_\ext(\bx)&=\sum_{m,n=0}^\infty \tu_{\ext,m,n}(\bx)\eps^m\mu^n, \quad \bx\in L_{\eps_0}.
\end{split}
\]
After substituting these into \eqref{eqn:transPDE2parac1} and equating terms of each pair
of powers of $\eps$ and $\mu$, we get the following results:
\begin{align*}
& -\Delta u_{\rin,m,n}=\delta_{0,m}\delta_{0,n}f_\rin \quad \text{in } {\dom},\\
& -\Delta  \tu_{\ext,m,n}=\delta_{0,m}\delta_{1,n}f_\ext  \quad \text{in } {L_{\eps_0}},\\
& \tu_{\ext,m,n}=\begin{cases}
u_{\rin,m-1,n}, \quad & m\geqslant 1\\
 0,\quad &  m= 0\\
\end{cases} \quad \text{on }\Gamma,\\
& \partial_{\bn} \tu_{\ext,m,n}=\begin{cases}
\partial_{\bn} u_{\rin,m,n-1}, \quad & n\geqslant 1\\
 0,\quad &  n= 0\\
\end{cases} \quad \text{on }\Gamma,\\
& u_{\rin,m,n}=\delta_{0,m}\delta_{0,n}g \quad \text{on } {\partial\dom_\eps\cap\partial\dom}.
\end{align*}
For the boundary condition $\tu_\ext=\eps g$ on $\partial\dom_\eps\cap\partial L_\eps$, applying the Taylor expansion method
as in Section \ref{sec:2} and Section \ref{sec:3} yields the following recursive boundary conditions on $\Gamma$ for $\tu_{\ext,m,n}$:
\begin{equation}\label{eqn:tumn}
\begin{cases}
\tu_{\ext,0,n} = 0, \\
\begin{split}
\tu_{\ext,m,n}=&\delta_{0,n}\dfrac{h^{m-1}}{(m-1)!}\partial_{\bn}^{m-1} g-\displaystyle\sum_{k=1}^m \dfrac{h^k}{k!}{\partial_{\bn}^k}\tu_{\ext,m-k,n}\\
=&\delta_{0,n}\dfrac{h^{m-1}}{(m-1)!}\partial_{\bn}^{m-1} g-h\partial_{\bn}\tu_{\ext,m-1,n}
\\
&-\sum_{k=2}^m\dfrac{h^k}{k!}F_{k,\delta_{k,m}\delta_{0,n}f_\ext}
[\tu_{\ext,m-k,n},\partial_{\bn}\tu_{\ext,m-k,n}],
\quad\forall m\geq 1.
\end{split}
\end{cases}
\end{equation}

Next, we transform these boundary conditions on $\Gamma$ for $\tu_{\ext,m,n}$ into those for $u_{\rin,m,n}$.
One has on the interface $\Gamma$
\[
\begin{split}
u_{\rin,m,n}=&\tu_{\ext,m+1,n}=\delta_{0,n}\dfrac{h^{m}}{m!}\partial_{\bn}^{m} g-h\partial_{\bn}\tu_{\ext,m,n}
\\
&~~~\qquad -\sum_{k=2}^{m+1}\dfrac{h^k}{k!}F_{k,\delta_{k,m+1}\delta_{1,n}f_\ext}
[\tu_{\ext,m+1-k,n},\partial_{\bn}\tu_{\ext,m+1-k,n}].
\end{split}
\]
Thus for $m=0$, we have on $\Gamma$
\[
\begin{split}
u_{\rin,0,n}=&\delta_{0,n} g-h\partial_{\bn}\tu_{\ext,0,n}
=\begin{cases}
g, & n=0,\\
-h\partial_{\bn}u_{\rin,0,n-1}, & n\geq 1;
\end{cases}
\end{split}
\]
and for $m\geq 1$ and $n=0$, on $\Gamma$
\[
\begin{split}
u_{\rin,m,0}=
&\dfrac{h^{m}}{m!}\partial_{\bn}^{m} g
-\displaystyle\sum_{k=2}^m\dfrac{h^k}{k!}F_{k,0}
[u_{\rin,m-k,0},0]
-\dfrac{h^{m+1}}{(m+1)!}F_{m+1,0}[0,0]\\
=&\dfrac{h^{m}}{m!}\partial_{\bn}^{m} g
-\displaystyle\sum_{k=2}^m\dfrac{h^k}{k!}F_{k,0}
[u_{\rin,m-k,0},0].
\end{split}
\]
Note that here we used the trivial fact $F_{m+1,0}[0,0]=0$ by definition.

For $m, n\geq 1$, on $\Gamma$
\[
\begin{split}
u_{\rin,m,n}
=&-h\partial_{\bn}u_{\rin,m,n-1}-\displaystyle\sum_{k=2}^m\dfrac{h^k}{k!}F_{k,0}
[u_{\rin,m-k,n},\partial_{\bn}u_{\rin,m+1-k,n-1}]\\
&-\dfrac{h^{m+1}}{(m+1)!}F_{m+1,\delta_{1,n}f_\ext}[0,\partial_{\bn}u_{\rin,0,n-1}].
\end{split}
\]

\subsection{Case (ii):   $\sigma/\eps\rightarrow0$, $\eps\rightarrow 0$.}

Now both  $\eps$ and $$\lambda:=\sigma/\eps$$ are   small parameters.
Introduce $\hu_\ext=\sigma u_\ext=\lambda\eps u_\ext.$
Then \eqref{eqn:transPDE2para} becomes
\[
\begin{cases}
& -\Delta u_\rin=f_\rin \quad \text{in } {\dom},\\
& -\Delta  \hu_\ext=f_\ext  \quad \text{in } {L_\eps},\\
& \hu_\ext=\eps\lambda u_\rin \quad \displaystyle \partial_{\bn} \hu_\ext= \partial_{\bn} u_\rin,  \quad \text{on } {\Gamma}, \\
& u_\rin=g \quad \text{on } \partial\dom_\eps\cap\partial\dom,\\
& \hu_\ext=\eps\lambda g \quad \text{on } {\partial\dom_\eps\cap \partial L_\eps}.
\end{cases}
\]
We have to further  study  two subcases and treat them  separately.

\subsubsection{Case (ii)$_1$: $\Gamma\neq\partial\dom$, or $\partial\dom\cap\partial\dom_\eps\neq\emptyset$}
This means the domain perturbation is only applied to a proper subset $\Gamma$ of the boundary $\partial\dom$.

Assume the double asymptotic expansions
\[\begin{split}
u_\rin(\bx)&=\sum_{m,n=0}^\infty u_{\rin,m,n}(\bx)\eps^m\lambda^n,  \quad \bx\in\dom,
\\
\hu_\ext(\bx)&=\sum_{m,n=0}^\infty \hu_{\ext,m,n}(\bx)\eps^m\lambda^n, \quad \bx\in L_{\eps_0}.
\end{split}
\]
Substituting these into \eqref{eqn:transPDE2para} and equating terms of each pair
of powers of $\eps$ and $\lambda$, we find that
\begin{align*}
& -\Delta u_{\rin,m,n}=\delta_{0,m}\delta_{0,n}f_\rin \quad \text{in } {\dom},\\
& -\Delta  \hu_{\ext,m,n}=\delta_{0,m}\delta_{0,n}f_\ext \quad \text{in } {L_{\eps_0}},\\
& \hu_{\ext,m,n}=\begin{cases}
 u_{\rin,m-1,n-1}, \quad & m, n\geqslant 1\\
 0,\quad & \text{otherwise}\\
\end{cases} \quad \text{on }\Gamma,\\
& \displaystyle\partial_{\bn} \hu_{\ext,m,n}=
\partial_{\bn} u_{\rin,m,n} \quad \text{on }\Gamma, \\
& u_{\rin,m,n}=\delta_{0,m}\delta_{0,n}g \quad \text{on } {\partial\dom_\eps\cap\partial\dom}.
\end{align*}
Applying the Taylor expansion method to the boundary condition $\hu_\ext=\eps\lambda g$ on $\partial\dom_\eps\cap\partial L_\eps$,
we obtain the following recursive boundary conditions on $\Gamma$ for $\hu_{\ext,m,n}$:
\begin{equation}\label{eqn:humn}
\begin{cases}
\hu_{\ext,0,n} = 0, \\
\begin{split}
\hu_{\ext,m,n}=&\delta_{1,n}\dfrac{h^{m-1}}{(m-1)!}\partial_{\bn}^{m-1} g-\displaystyle\sum_{k=1}^m \dfrac{h^k}{k!}{\partial_{\bn}^k}\hu_{\ext,m-k,n}\\
=&\delta_{1,n}\dfrac{h^{m-1}}{(m-1)!}\partial_{\bn}^{m-1} g-h\partial_{\bn}\hu_{\ext,m-1,n}
\\
&
-\sum_{k=2}^m\dfrac{h^k}{k!}F_{k,\delta_{k,m}\delta_{0,n}f_\ext}
[\hu_{\ext,m-k,n},\partial_{\bn}\hu_{\ext,m-k,n}],
\quad\forall m\geq 1.
\end{split}
\end{cases}
\end{equation}

Next, we convert these boundary conditions for $\hu_{\ext,m,n}$ into those for $u_{\rin,m,n}$.
It turns out that   the {\it Neumann  boundary condition} on $\Gamma$ appears in this case.
For $m=0$,  on $\Gamma$
\[
u_{\rin,0,n}=\hu_{\ext,1,n+1}=\delta_{0,n}g-h\partial_{\bn}\hu_{\ext,0,n+1}=\delta_{0,n}g-h\partial_{\bn}u_{\rin,0,n+1},
\]
thus we get for $n\geqslant 1$,
\[
\partial_{\bn}u_{\rin,0,n}=\frac{\delta_{1,n}g}h-\frac1h u_{\rin,0,n-1};
\]
moreover, we have for $m=n=0$, on $\Gamma$
\[
\partial_{\bn}u_{\rin,0,0}=\partial_{\bn}\hu_{\ext,0,0}=-\frac1h\hu_{\ext,1,0}=0.
\]
For $m\geqslant 1$ and $n=0$ , one has on $\Gamma$,
\[
\begin{split}
&\partial_{\bn}u_{\rin,m,0}=\partial_{\bn}\hu_{\ext,m,0}\\
=&-\frac1h\hu_{\ext,m+1,0}-\sum_{k=2}^{m+1}\dfrac{h^{k-1}}{k!}F_{k,\delta_{k,m+1}f_\ext}
[\hu_{\ext,m+1-k,0},\partial_{\bn}\hu_{\ext,m+1-k,0}]\\
=&-\sum_{k=2}^{m+1}\dfrac{h^{k-1}}{k!}F_{k,\delta_{k,m+1}f_\ext}
[0,\partial_{\bn}u_{\rin,m+1-k,0}];
\end{split}
\]
and for $m,n\geqslant 1$, on $\Gamma$
\[
\begin{split}
&\partial_{\bn}u_{\rin,m,n}=\partial_{\bn}\hu_{\ext,m,n}\\
=&\delta_{1,n}\dfrac{h^{m-1}}{m!}\partial_{\bn}^{m} g-\frac1h{\hu_{\ext,m+1,n}}-\sum_{k=2}^{m+1}\dfrac{h^{k-1}}{k!}F_{k,0}[\hu_{\ext,m+1-k,n},\partial_{\bn}\hu_{\ext,m+1-k,n}]\\
=&\delta_{1,n}\dfrac{h^{m-1}}{m!}\partial_{\bn}^{m} g-\frac1h u_{\rin,m,n-1}-\sum_{k=2}^{m}\dfrac{h^{k-1}}{k!}F_{k,0}[u_{\rin,m-k,n-1},\partial_{\bn}u_{\rin,m+1-k,n}]\\
&-\dfrac{h^{m}}{(m+1)!}F_{m+1,0}[0,\partial_{\bn}u_{\rin,0,n}].
\end{split}
\]

Note that the boundary conditions  on $\partial\dom$ for $u_{\rin,m,n}$ are
the mixture  of the Neumann
conditions on $\Gamma$ and the  Dirichlet conditions
$u_{\rin,m,n}=\delta_{0,m}\delta_{0,n}g$
on $\partial\dom\setminus\Gamma$.

 \subsubsection{Case (ii)$_2$: $\Gamma=\partial\dom$, or $\partial\dom\cap\partial\dom_\eps=\emptyset$}
 \label{subsubsec:Case 2.2}
In this case, the domain perturbation is applied to  the whole boundary $\partial\dom$.
It turns out that $u_{\rin}$ is at the order $\Od(\lambda^{-1})$.
So we assume
\[
u_\rin(\bx)=\sum_{m=0}^\infty\sum_{n=-1}^\infty u_{\rin,m,n}(\bx)\eps^m\lambda^n,  \quad \bx\in\dom.
\]
Consequently, the transmission conditions on $\Gamma=\partial\dom$ become
\begin{align*}
& \hu_{\ext,m,n}=\begin{cases}
 u_{\rin,m-1,n-1}, \quad & m\geqslant 1 \\
 0,\quad & m=0
\end{cases} \quad \text{on }\partial\dom,\\
& \displaystyle\partial_{\bn} u_{\rin,m,n}=\begin{cases}
\partial_{\bn} \hu_{\ext,m,n}, \quad & n\geqslant 0\\
 0,\quad & n=-1
\end{cases} \quad \text{on }\partial\dom.
\end{align*}
In addition, \eqref{eqn:humn} still holds.
We already have
$
\partial_{\bn}u_{\rin,m,-1}=0;
$ on $\partial\dom$,
and for $n\geq 0$, on $\partial\dom$
\[
\begin{split}
\partial_{\bn}u_{\rin,m,n}=&\partial_{\bn}\hu_{\ext,m,n}\\
=&\delta_{1,n}\dfrac{h^{m-1}}{m!}\partial_{\bn}^m g-\frac1h \hu_{\ext,m+1,n}\\
&-\sum_{k=2}^{m+1}\dfrac{h^{k-1}}{k!}F_{k,\delta_{k,m+1}\delta_{0,n}f_\ext}
[\hu_{\ext,m+1-k,n},\partial_{\bn}\hu_{\ext,m+1-k,n}].
\end{split}
\]
Thus  on $\partial\dom$, one has for $m=0$, $n\geq 0$,
\[
\partial_{\bn}u_{\rin,0,n}=\dfrac{\delta_{1,n}g}h-\frac1h \hu_{\ext,1,n}=\dfrac{\delta_{1,n}g}h-\frac1h u_{\rin,0,n-1};
\]
and for $m\geq 1$, $n\geq 0$,
\[
\begin{split}
\partial_{\bn}u_{\rin,m,n}
=&\delta_{1,n}\dfrac{h^{m-1}}{m!}\partial_{\bn}^m g-\frac1h u_{\rin,m,n-1}-\dfrac{h^{m}}{(m+1)!}F_{m+1,\delta_{0,n}f_\ext}
[0,\partial_{\bn}u_{\rin,0,n}]\\
&-\sum_{k=2}^{m}\dfrac{h^{k-1}}{k!}F_{k,0}
[u_{\rin,m-k,n-1},\partial_{\bn}u_{\rin,m+1-k,n}].
\end{split}
\]

The above Neumann boundary value problems for $u_{\rin,m,n}$
 are   not well-posed, since
the solution to the Poisson equation with pure Neumann boundary condition
\[
\begin{cases}
-\Delta u=f, & \text{in }\dom,\\
\partial_{\bn} u=g, \quad  &\text{on }\partial\dom,
\end{cases}
\]
can only be determined up to constant.
However, note that a necessary condition for the existence of a solution to the Neumann problem is
\[
\int_{\partial\dom} g=-\int_\dom f.
\]
Applying this solvability condition to the Neumann problem for $u_{\rin,m,n+1}$ leads to
an additional boundary integral condition for $u_{\rin,m,n}$.
Specifically,  the following solvability conditions  can uniquely determine $u_{\rin,m,n}$:
\[
\int_{\partial\dom}\dfrac{u_{\rin,0,n}}h
=\int_{\dom}\delta_{-1,n}f_\rin+\int_{\partial\dom}\frac{\delta_{0,n}g}h,
~~~\qquad n\geq 0,
\]
and  for $m\geq 1$, $n\geq 0$,
\[
\begin{split}
\int_{\partial\dom}\dfrac{u_{\rin,m,n}}h
=
 &~\quad \int_{\partial\dom} \delta_{0,n}\dfrac{h^{m-1}}{m!}\partial_{\bn}^m g
\\
&-\sum_{k=2}^{m}\int_{\partial\dom}\dfrac{h^{k-1}}{k!}F_{k,0}
[u_{\rin,m-k,n},\partial_{\bn}u_{\rin,m+1-k,n+1}]\\
&-\int_{\partial\dom}\dfrac{h^{m}}{(m+1)!}F_{m+1,\delta_{-1,n}f_\ext}
[0,\partial_{\bn}u_{\rin,0,n+1}].
\end{split}
\]

\subsection{Case (iii): $\eps/\sigma\rightarrow c\in (0,~\infty)$, $\eps\rightarrow 0$.}
For this case, we introduce the small parameter
\[
\theta:=\frac\eps\sigma-c,
\]
and also rescale the exterior solution  $\tu_\ext=\eps u_\ext$ as in Case (i).
Plugging the ans\"{a}tz
\[\begin{split}
u_\rin(\bx)&=\sum_{m,n=0}^\infty u_{\rin,m,n}(\bx)\eps^m\theta^n  \quad \bx\in\dom,
\\
\tu_\ext(\bx)&=\sum_{m,n=0}^\infty \tu_{\ext,m,n}(\bx)\eps^m\theta^n \quad \bx\in L_{\eps_0}.
\end{split}
\]
into \eqref{eqn:transPDE2parac1} yields the following
\begin{align}
& -\Delta u_{\rin,m,n}=\delta_{0,m}\delta_{0,n}f_\rin  \quad \text{in } {\dom},\\
& -\Delta  \tu_{\ext,m,n}=\delta_{0,m}\delta_{0,n}cf_\ext+\delta_{0,m}\delta_{1,n}f_\ext  \quad \text{in } {L_{\eps_0}},\\
& \tu_{\ext,m,n}=\begin{cases}
u_{\rin,m-1,n}, \quad & m\geqslant 1\\
 0,\quad &  m= 0\\
\end{cases} \quad \text{on }\Gamma,\\
& \partial_{\bn} \tu_{\ext,m,n}=\begin{cases}
\partial_{\bn} u_{\rin,m,n-1}+c\partial_{\bn} u_{\rin,m,n}, \quad & n\geqslant 1\\
c\partial_{\bn} u_{\rin,m,n},\quad &  n= 0
\end{cases} \quad \text{on }\Gamma,\\
& u_{\rin,m,n}=\delta_{0,m}\delta_{0,n}g \quad \text{on } {\partial\dom_\eps\cap\partial\dom}.
\end{align}
From the boundary condition $\tu_\ext=\eps g$ on $\partial\dom_\eps\cap\partial L_\eps$,
the recursive boundary conditions on $\Gamma$ for $\tu_{\ext,m,n}$ are derived in \eqref{eqn:tumn}.
The derivation of the boundary conditions  of $u_{\rin,m,n}$ from those of   $\tu_{\ext,m,n}$
is below.

For $m=0$, one has on $\Gamma$
\[
\begin{split}
u_{\rin,0,n}=\tu_{\ext,1,n}
=&\delta_{0,n} g-h\partial_{\bn}\tu_{\ext,0,n}\\
=&\begin{cases}
g-ch\partial_{\bn}u_{\rin,0,0}, \quad & n=0,\\
-ch\partial_{\bn}u_{\rin,0,n}-h\partial_{\bn}u_{\rin,0,n-1}, & n\geq 1.
\end{cases}
\end{split}
\]
Thus we obtain the following {\it Robin boundary conditions}
\[
u_{\rin,0,n}+ch\partial_{\bn}u_{\rin,0,n}=\begin{cases}
g, \quad & n=0,\\
-h\partial_{\bn}u_{\rin,0,n-1}, & n\geq 1.
\end{cases}
\]

For $m\geq 1$ and $n=0$, on $\Gamma$,
\[
\begin{split}
u_{\rin,m,0}=&\tu_{\ext,m+1,0}\\
=&\dfrac{h^{m}}{m!}\partial_{\bn}^{m} g-h\partial_{\bn}\tu_{\ext,m,0}
-\sum_{k=2}^{m+1}\dfrac{h^k}{k!}F_{k,\delta_{k,m+1}cf_\ext}
[\tu_{\ext,m+1-k,0},\partial_{\bn}\tu_{\ext,m+1-k,0}]\\
=&\dfrac{h^{m}}{m!}\partial_{\bn}^{m} g-ch\partial_{\bn}u_{\rin,m,0}
-\sum_{k=2}^{m}\dfrac{h^k}{k!}F_{k,0}
[u_{\rin,m-k,0},c\partial_{\bn}u_{\rin,m+1-k,0}]\\
&-\dfrac{h^{m+1}}{(m+1)!}F_{m+1,cf_{\ext}}
[0,c\partial_{\bn}u_{\rin,0,0}],
\end{split}
\]
hence the Robin boundary condition on $\Gamma$ is
\[
\begin{split}
u_{\rin,m,0}+ch\partial_{\bn}u_{\rin,m,0}=&\dfrac{h^{m}}{m!}\partial_{\bn}^{m} g
-\sum_{k=2}^{m}\dfrac{h^k}{k!}F_{k,0}
[u_{\rin,m-k,0},c\partial_{\bn}u_{\rin,m+1-k,0}]
\\&-\dfrac{h^{m+1}}{(m+1)!}F_{m+1,cf_{\ext}}
[0,c\partial_{\bn}u_{\rin,0,0}].
\end{split}
\]

For $m, n\geq 1$, on $\Gamma$, we have
\[
\begin{split}
u_{\rin,m,n}=&\tu_{\ext,m+1,n}\\
=&-h\partial_{\bn}\tu_{\ext,m,n}-\sum_{k=2}^{m+1}\dfrac{h^k}{k!}F_{k,\delta_{k,m+1}\delta_{1,n}f_\ext}
[\tu_{\ext,m+1-k,n},\partial_{\bn}\tu_{\ext,m+1-k,n}],\\
=&-ch\partial_{\bn}u_{\rin,m,n}-h\partial_{\bn}u_{\rin,m,n-1}\\
&-\sum_{k=2}^{m}\dfrac{h^k}{k!}F_{k,0}
[u_{\rin,m-k,n},c\partial_{\bn}u_{\rin,m+1-k,n}+\partial_{\bn}u_{\rin,m+1-k,n-1}]\\
&-\dfrac{h^{m+1}}{(m+1)!}F_{m+1,\delta_{1,n}f_{\ext}}
[0,c\partial_{\bn}u_{\rin,0,n}+\partial_{\bn}u_{\rin,0,n-1}],
\end{split}
\]
and thus the Robin boundary condition on $\Gamma$ is
\[
\begin{split}
&u_{\rin,m,n}+ch\partial_{\bn}u_{\rin,m,n}\\
=&-h\partial_{\bn}u_{\rin,m,n-1}
-\dfrac{h^{m+1}}{(m+1)!}F_{m+1,\delta_{1,n}f_\ext}[0,\partial_{\bn}u_{\rin,0,n-1}+c\partial_{\bn}u_{\rin,0,n}]\\
&-\displaystyle\sum_{k=2}^m\dfrac{h^k}{k!}F_{k,0}
[u_{\rin,m-k,n},\partial_{\bn}u_{\rin,m+1-k,n-1}+c\partial_{\bn}u_{\rin,m+1-k,n}].
\end{split}
\]

\medskip
To summarize the above three cases,
we find that the limit $ {\eps}/{\sigma}\to c$ is quite important:
the value of $c$ determines  the type of the boundary conditions
in the asymptotic series.
$c=0$ means  $\eps$ decays faster than $\sigma$
or $\sigma$ is not  a small value,
 and our result shows that the  boundary conditions
 for the asymptotic expansions  remain the  Dirichlet type.
$c=\infty$ corresponds to a very small conductivity
in the exterior layer,  and in this case, it is interesting to see the Neumann conditions on  $\partial\dom$
  for   all  terms in the asymptotic expansions.
    The case of  $c \in (0, \infty)$  that leads to the Robin boundary conditions
can be regarded as between the above two extreme cases.

\section{Asymptotic Expansion for the perturbed Interface Problem}\label{sec:5}
The previous sections on the interface problem
assume that the interface is the boundary of the fixed domain $\dom$.
The geometric perturbation is only applied to the outside layer.
In this section, we focus on the situation where
the interface is perturbed.
The setting is the following.
 Assume  $\dom$ is a smooth bounded domain and  is partitioned into two   subdomains separated by  an interface $\Gamma_\eps$:
\begin{equation}
\dom = \domep \cup  \domen \cup \Gamma_\eps.
\end{equation}
 $\Gamma_\eps=\partial \domen\cap\partial\domep$ is assumed smooth.
The interface $\Gamma_\eps$ is modelled in a perturbative way.
Assume there is  a fixed interface $\Gamma$ and let $\bn(\bx)$
be the unit normal vector on $\Gamma$ pointing  outward of  $\dom^-$.
That is, the whole domain $\dom$ has a fixed decomposition
$\dom=\dom^-\cup \dom^+\cup \Gamma$.
Then we define $\Gamma_\eps$ for $\eps < \eps_0$
\begin{equation}\label{eqn:Gammaeps}
\Gamma_\eps =\set{\bx': \bx' = \bx + \eps h(\bx) \bn(\bx), \bx \in \Gamma}.
\end{equation}

 We  consider  the following   interface problem on $\dom$ with
 transmission condition on the interface $\Gamma_\eps$:
\begin{equation} \label{EIP}
\begin{cases}
& -   \nabla\cdot \left (\sigma^\pm(\bx)\nabla u^\pm_\eps(\bx) \right) =  f(\bx) \quad \text{in } {\dom_\eps^\pm},\\
& {u^+_\eps(\bx)}={u^-_\eps(\bx)},  \quad \displaystyle {\sigma^+(\bx) \partial_{\bn_\eps} u^+_\eps(\bx)}=
 {\sigma^-(\bx) \partial_{\bn_\eps} u^-_\eps(\bx)}\quad \text{on } {\Ge}, \\
& u_\eps^\pm =g \quad \text{on } \partial\dom\cap\partial\dom_\eps^\pm,
\end{cases}
\end{equation}
where $\sigma^\pm(\bx)>0$ for every $\bx\in\dom_\eps^\pm$, and $\bn_\eps(\bx)$
is the unit normal vector on $\Gamma_\eps$ pointing  outward of  $\dom^-_\eps$.
Denote   $u_\eps$ restricted  on  $\domep$  and    $\domen$ by $u^+_\eps$ and $u^-_\eps$, respectively.
For this  interface problem  \eqref{EIP},
the variational formulation   reads as follows:
Seek $u_\eps\in H^1_0(\dom)$ such that
\begin{equation} \label{EIP-weak}
\int_{\domep}\! \sigma^+ \nabla u^+_\eps \cdot \nabla v \,\rd \bx +
\int_{\domen}\! \sigma^- \nabla u^-_\eps \cdot \nabla v \,\rd\bx =
\int_{\dom} \! f v\,\rd\bx ~~~, \forall  v \in H_0^1(\dom).
\end{equation}
We assume that $\sigma^\pm$ are defined on sufficiently large domains such that for every sufficiently small $\eps$,
$\sigma^\pm \in C^\infty (\dom_\eps^\pm)$.
We also assume
$f \in C^\infty (\dom)$.
Then,
$u^\pm_\eps \in C^{\infty}(\overline{\dom^\pm_\eps})$.

Assume the coefficient  $\sigma(\bx)$ is the piecewise homogeneous case:
\begin{equation} \label{def:a}
\sigma(\bx)=
\begin{cases}
1, &  \bx \in \domep, \\
\sigma, & \bx \in \domen,
\end{cases}
\end{equation}
where  $\sigma$ is a positive constant.   We are interested in 
the high-contrast ratio limit, which corresponds to
 a very small or very large value of $\sigma$.

\cite{HarbLi2013} has studied  the first order
and second order perturbations to the problem \eqref{EIP} by the
method of  shape calculus  for small $\eps$. The second order approximation
was obtained by considering the Hessian with respect to the perturbation function
$h$ on the reference interface $\Gamma$.
We  shall show how to derive the expansions   for small $\eps$ up to any order
by the method of Taylor expansion.
The main tool used here is similar to our previous work in  \cite{TPT_SA}
to calculate the first order derivative.
After deriving the $\eps$-expansion,
we proceed to the two-parameter expansion.

%

\subsection{Asymptotic expansions in $\eps$}
 \subsubsection{The extension of $u^\pm_\eps$}
 \begin{figure}
\includegraphics[width=0.42\textwidth]{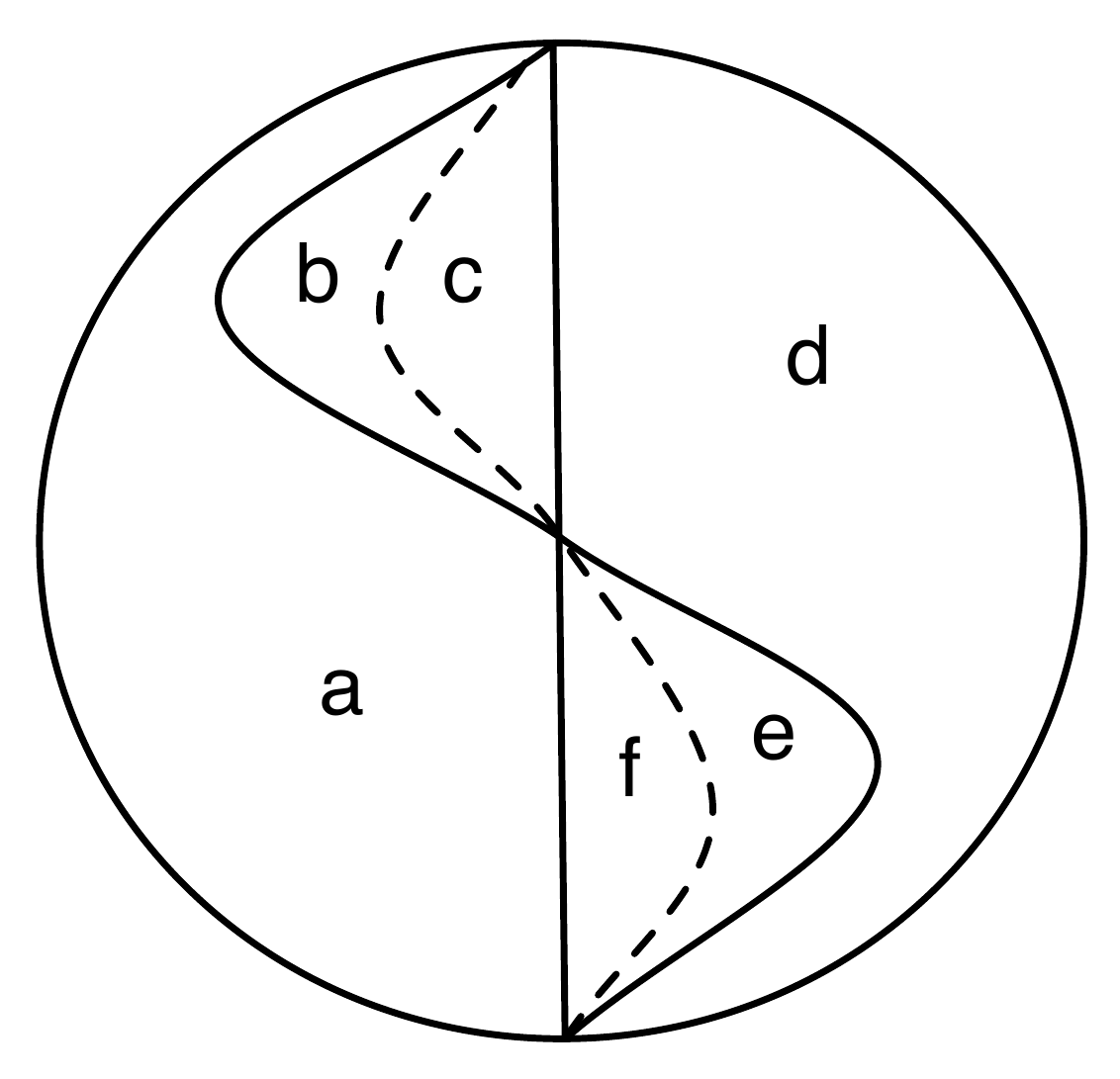}
\hfill
\includegraphics[width=0.41\textwidth]{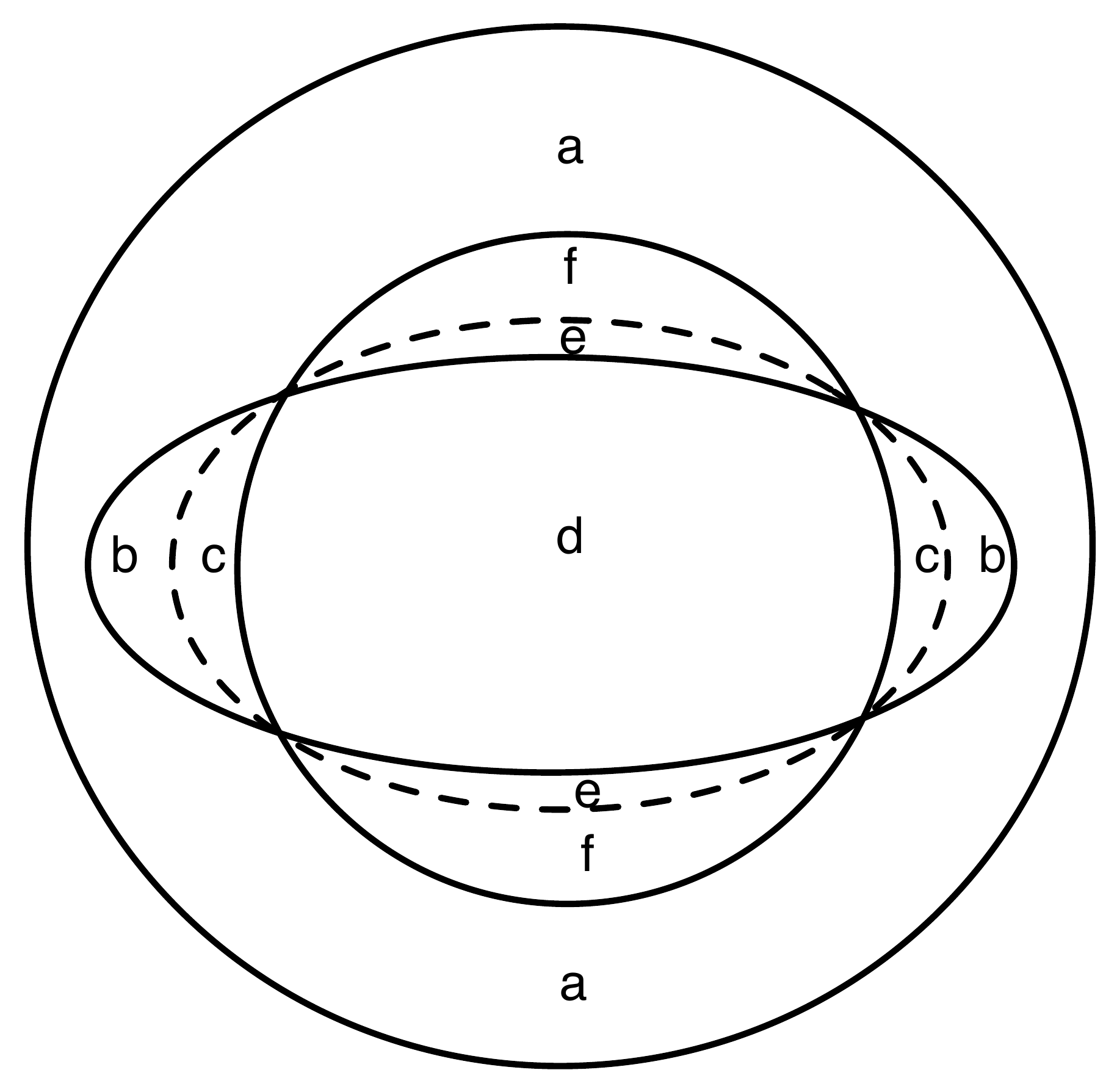}
\caption{Schematic illustration of the interface perturbations and extensions for two different cases
of the interface problem.
The unperturbed interfaces $\Gamma$ are
the vertical diameter  (left) and the inner circle (right) respectively, while
the perturbed interfaces $\Gamma_\eps$ are the dashed lines for both cases.
The unperturbed, $\eps$-perturbed and $\eps_0$-perturbed subdomains are respectively $\dom^+=a\cup b\cup c$, $\dom^-=d\cup e\cup  f$;
$\dom^+_\eps=a\cup b\cup f$, $\dom^-_\eps=d\cup c\cup e$; $\dom^+_{\eps_0}=a\cup e\cup f$, $\dom^-_{\eps_0}=d\cup b\cup c$.
$u^\pm_\eps$ are extended to the sufficiently large fixed domains $\dom^+\cup\dom^+_{\eps_0}=a\cup b\cup c\cup e\cup f$ and
$\dom^-\cup\dom^-_{\eps_0}=d\cup b\cup c\cup e\cup f$ respectively. The Cauchy problems for $\tu_\eps^\pm$ are
imposed in the thin layers $\triangle_\eps^+=c\cup e$ and $\triangle_\eps^-=b\cup f$ respectively.
}
\label{fig:interface perturbation}
\end{figure}

The first technical issue 
when applying the Taylor expansion is how to extend the  solutions 
$u^\pm_\eps$  
 of \eqref{EIP-weak} from their own subdomains $\dom_\eps^\pm$
onto    the  larger and fixed domains which both include
the interface $\Gamma_\eps$ for all $\eps\in [0,\eps_0]$. 
 Such   domains are chosen as  $\dom^\pm\cup\dom_{\eps_0}^\pm$.
 On these fixed domains $\dom^\pm\cup\dom_{\eps_0}^\pm$, $u^\pm_\eps$  are known on the parts  $\overline{\dom_\eps^\pm}$;
we thus consider  the differences   $ \triangle_{\eps}^\pm$ which
consist of the disjoint  thin layers:
\[
 \triangle_{\eps}^\pm:=(\dom^\pm\cup\dom_{\eps_0}^\pm)\setminus\overline{\dom_\eps^\pm} =(\dom^\pm\setminus\overline{\dom_\eps^\pm})\cup
 (\dom_{\eps_0}^\pm\setminus\overline{\dom_\eps^\pm}).
\]
Refer to  Figure  \ref{fig:interface perturbation}.
Denote the solution extended on $\triangle_{\eps}^\pm$ by  $\tu_\eps^\pm$,
and assume that  $\tu_\eps^\pm$  and $u_\eps^\pm$  have the same values and
the same normal derivatives on the common boundary $\Gamma_\eps$.
Specifically,  $\tu_\eps^\pm$ are constructed as  the unique solutions to  the following { Cauchy} problems     posed in the thin layers $\dom^\pm\setminus\overline{\dom_\eps^\pm}$ and
 $\dom_{\eps_0}^\pm\setminus\overline{\dom_\eps^\pm}$ respectively:
\begin{equation*}
\begin{cases}
 -   \nabla\cdot \left (\sigma(\bx)\nabla \tu_\eps^\pm(\bx) \right) =  f(\bx) \qquad   &\text{in } \triangle_{\eps}^\pm=(\dom^\pm\setminus\overline{\dom_\eps^\pm})\cup
 (\dom_{\eps_0}^\pm\setminus\overline{\dom_\eps^\pm}),\\
\tu_\eps^\pm=u_\eps^\pm, \quad \partial_{\bn}  \tu_\eps^\pm=\partial_{\bn} u_\eps^\pm\quad &\text{on } \Gamma_\eps,
\end{cases}
\end{equation*}
where $u_\eps^\pm$, the solution to equation \eqref{EIP-weak}, are presumably given.
As in Section \ref{subsubsec:extension}, the Cauchy-Kovalevskaya theorem  \cite{Evans:1998} guarantees that
such extensions can be realized analytically for sufficiently small $\eps_0$ so that the Taylor expansion can be applied in a neighbourhood of $\Gamma$.

\subsubsection{Asymptotic expansions on the fixed subdomains $\dom^\pm$}

For ease of notation, we will still use $u^\pm_\eps$ to denote their extensions
defined above.
Let us consider
\begin{equation}\label{eqn:ansatz}
u^\pm_\eps=u_0^\pm + \eps u_1^\pm  + \eps^2 u_2^\pm +   \ldots.
\end{equation}

First, for the transmission condition  $u_\eps^+=u_\eps^-$ on $\Gamma_\eps$ in \eqref{EIP},
by   \eqref{eqn:Gammaeps}  we have
\[
u^+_\eps(\bx+\eps h(\bx)\bn(\bx))=u^-_\eps(\bx+\eps h(\bx)\bn(\bx)), \quad \bx\in\Gamma,
\]
and by \eqref{eqn:ansatz}, we have
\[
\sum_{n=0}^\infty \eps^n u^+_n(\bx+\eps h(\bx)\bn(\bx))=\sum_{n=0}^\infty \eps^n u^-_n(\bx+\eps h(\bx)\bn(\bx)), \quad \bx\in\Gamma.
\]
Then the Taylor expansions in $\eps$ on both sides as before can yield
\[
\sum_{k=0}^n\frac{(h(\bx))^k}{k!}{\partial_{\bn}^k}u^+_{n-k}(\bx)=
\sum_{k=0}^n\frac{(h(\bx))^k}{k!}{\partial_{\bn}^k}u^-_{n-k}(\bx),\quad \bx\in\Gamma,
\]
thus
\begin{equation}\label{eqn:unjumpcond}
\sqbk{u_n(\bx)}=-\sum_{k=1}^n\frac{(h(\bx))^k}{k!}\sqbk{{\partial_{\bn}^k}u_{n-k}(\bx)} \quad \text{on } {\Gamma},
\end{equation}
where $\sqbk{v}:=v^+ - v ^-$ deontes the jump  across the subdomains from $\dom^-$ to $\dom^+$.

On the other hand, from the variational form \eqref{EIP-weak}, we obtain
\begin{equation}\label{eqn:chdom}
\begin{split}
&\int_{\dom^+} \sigma^+ \nabla u^+_\eps \cdot \nabla v \,\rd\bx +
\int_{\dom^-} \sigma^- \nabla u^-_\eps \cdot \nabla v \,\rd\bx\\
&- \int_{\delta\dom_\eps} \sigma^+ \nabla u^+_\eps \cdot \nabla v\,\rd\bx+ \int_{\delta\dom_\eps} \sigma^- \nabla u^-_\eps \cdot \nabla v\,\rd\bx=
\int_{\dom} f v\,\rd\bx,
\end{split}
\end{equation}
where $\delta\dom_\eps=(\dom_\eps^+\setminus\dom^+)\cup(\dom^+\setminus\dom_\eps^+)=(\dom^-\setminus\dom_\eps^-)\cup(\dom^-_\eps\setminus\dom^-)$,
and the integrand on $\delta \dom_\eps$
is taken with a minus sign over $\dom_\eps^+\setminus\dom^+=\dom^-\setminus\dom_\eps^-$
 and  a plus sign over $\dom^+\setminus\dom_\eps^+=\dom^-_\eps\setminus\dom^-$.
Note that
\[
\dom^+\setminus\dom_\eps^+=\dom^-_\eps\setminus\dom^-=\set{\bx+t\bn(\bx):\bx\in\Gamma,~h(\bx)>0, ~0\leq t<\eps h(\bx)},
\]
\[
\dom_\eps^+\setminus\dom^+=\dom^-\setminus\dom_\eps^-=\set{\bx+t\bn(\bx):\bx\in\Gamma,~h(\bx)<0, ~\eps h(\bx)< t\leq 0}.
\]

To handle the integration $\int_{\delta \dom_\eps}$ in \eqref{eqn:chdom}, we introduce the curvilinear coordinates $(\bxi,t)$  in a sufficiently small tubular neighborhood of $\Gamma$,
which are   defined by $$\bx =\btheta(\bxi)+t\bn(\bxi),$$ where $\bxi=(\xi_1,\cdots,\xi_{d-1})\in\Omega \longmapsto \btheta(\bxi)\in\Gamma$
is a parametrization of the interface $\Gamma$ and $\bn(\bxi):=\bn(\btheta(\bxi))$.
Then for any smooth function $g$, by making use of a change of variables, we have
\begin{equation}\label{eqn:chofvar}%
\begin{split}
\int_{\delta\dom_\eps} g(\bx)\,\rd\bx=&\int_{\Omega}\biggl(\int_0^{\eps h(\btheta(\bxi))}g(\btheta(\bxi)+t\bn(\bxi))\abs{J(\bxi,t)} \,\rd t\biggr)\rd {\bxi},
\end{split}
\end{equation}
where $J(\bxi,t)$ is the Jacobian determinant of the mapping $(\bxi,t)\longmapsto \bx$.
By Appendix \ref{sec:proof3},   \eqref{eqn:chofvar} is equivalent to 
\begin{equation}\label{eqn:intdeldomgen}
\int_{\delta\dom_\eps} g(\bx)\,\rd\bx=\int_{\Gamma}\biggl(\int_0^{\eps h(\btheta)}\tilde{g}(\btheta,t)\det(I+tW) \,\rd t\biggr)\rd S_\Gamma({\btheta}),
\end{equation}
where $\tilde{g}(\btheta,t):=g(\btheta+t\bn(\btheta))$, $\btheta\in\Gamma$,
 $I$ denotes the identity matrix, $W=(W_i^{~j})$ is the matrix representation
of the   Weingarten map,
 and $\rd S_{\Gamma}(\btheta)$ denotes  the surface area element on the hypersurface $\Gamma$.

The equality  \eqref{eqn:intdeldomgen} is the major foundation to apply 
the asymptotic expansion. We show how to proceed this task
by considering the first two orders $u^\pm_0$ and $u^\pm_1$. 
Since \begin{equation}\label{232}
\tilde{g}(\btheta,t)\det(I+tW)=\tilde{g}(\btheta,0)+\Od(t)=g(\btheta)+\Od(t),
\end{equation}
 then
\begin{equation}\label{eqn:intdeldom01}
\begin{split}
\int_{\delta\dom_\eps} g(\bx)\,\rd\bx
=&~\eps\int_\Gamma
h(\btheta)g(\btheta)\,\rd S_\Gamma({\btheta})+\Od(\eps^2).
\end{split}
\end{equation}
Note that  on $\Gamma$, we have the following orthogonal decomposition of the gradient operator:
\[
\nabla=\nabla_\Gamma+\bn\partial_{\bn},
\]
where $\nabla_\Gamma$ denotes the surface gradient operator.
Then
\begin{equation}\label{eqn:nabladotnabla}
\sqbk{\sigma\nabla  u_k}\cdot\nabla v=\sqbk{\sigma\nabla_\Gamma u_k} \cdot\nabla_\Gamma v+\sqbk{\sigma\partial_{\bn} u_k}\partial_{\bn}v\quad \text{on }\Gamma.
\end{equation}
Substituting \eqref{eqn:ansatz} into \eqref{eqn:chdom}, and applying \eqref{eqn:intdeldom01} and \eqref{eqn:nabladotnabla}, we are led to
\[
\begin{split}
&\sum_{n=0}^1\eps^n\int_{\dom^+} \sigma^+ \nabla u^+_n \cdot \nabla v \,\rd\bx +\sum_{n=0}^1\eps^n
\int_{\dom^-} \sigma^- \nabla u^-_n \cdot \nabla v \,\rd\bx\\
&- \eps\int_{\Gamma}h \bigl(\sqbk{\sigma\nabla_\Gamma u_0} \cdot\nabla_\Gamma v+\sqbk{\sigma\partial_{\bn} u_0}\partial_{\bn}v\bigr)\,\rd S_\Gamma
+\Od(\eps^2)=
\int_{\dom} f v\,\rd\bx.
\end{split}
\]
Now we collect terms with equal powers of $\eps$ and obtain:
\begin{equation}\label{eqn:u0weak}
\int_{\dom^+} \sigma^+ \nabla u^+_0 \cdot \nabla v \,\rd\bx +
\int_{\dom^-} \sigma^- \nabla u^-_0 \cdot \nabla v \,\rd\bx=
\int_{\dom} f v\,\rd\bx;
\end{equation}
\begin{equation}\label{eqn:u1weak}
\begin{split}
&\int_{\dom^+} \sigma^+ \nabla u^+_1 \cdot \nabla v \,\rd\bx +
\int_{\dom^-} \sigma^- \nabla u^-_1 \cdot \nabla v \,\rd\bx\\
&- \int_{\Gamma}h\bigl(\sqbk{\sigma \nabla_\Gamma u_{0}} \cdot \nabla_\Gamma v
+\sqbk{\sigma \partial_{\bn} u_{0}}\partial_{\bn} v\bigr)\,\rd S_{\Gamma}=0 \quad \forall   v \in H_0^1(\dom).
\end{split}
\end{equation}
These weak formulations  together with \eqref{eqn:unjumpcond} with $n=0, 1$, lead to  the following two PDEs  for $u_0$ and $u_1$, respectively:
\begin{equation}\label{eqn:u0PDEpbm}
\begin{cases}
& -   \nabla\cdot \left (\sigma(\bx)\nabla u_0(\bx) \right) =  f(\bx)  \quad \text{in } {\dom^+} \cup \dom^-,\\
& \sqbk{u_0(\bx)}=   \displaystyle \sqbk{ \sigma(\bx) \partial_{\bn} u_0(\bx) }=0  \quad \text{on } {\Gamma}, \\
& u =g \quad \text{on } \partial\dom.
\end{cases}
\end{equation}
and
\begin{equation}\label{eqn:u1PDE}
\begin{cases}
& -   \nabla\cdot \left (\sigma(\bx)\nabla u_1(\bx) \right) = 0  \quad \text{in } {\dom^+} \cup \dom^-,\\
& \sqbk{u_1}=-h\sqbk{\partial_{\bn}u_0},\quad   \displaystyle \sqbk{ \sigma\partial_{\bn} u_1 }=
\nabla_\Gamma\cdot (h\sqbk{\sigma} \nabla_\Gamma u_{0}) \quad \text{on } {\Gamma}, \\
& u_1 =0 \quad \text{on } \partial\dom.
\end{cases}
\end{equation}

The equations for higher order terms, $u_{n}, n\geq 2$,
can be  derived in the same way by considering the higher order Taylor approximations for  \eqref{232}.

\subsection{Two-parameter expansions}

The expansion of high-contrast ratio without interface perturbation is derived  in \cite{CEG2014}.
In the sequel, we show the two-parameter expansion results by combining our $\eps$ expansion and
the $\sigma$-expansion in  \cite{CEG2014}. We need to consider the following
two different cases:
\begin{itemize}
\item[(i)] $\eps\rightarrow 0,\sigma\rightarrow \infty$,
\item[(ii)] $\eps\rightarrow 0,\sigma\rightarrow 0$.
\end{itemize}
Note that $\sigma$ is defined on the subdomain $\dom_\eps^-$ by \eqref{def:a}.
For Case (i), the solution is still bounded;
 for Case (ii), the solution on $\dom^-_\eps$
behaves at the order $\Od(1/\sigma)$.
The difference between these two cases
is mainly a scaling factor $1/\sigma$.
We  focus on   Case (i) here.
The derivation for Case (ii) can be found in Appendix \ref{sec:case}.

In Case (i), we have $\eps\to0$ and $\sigma\to+\infty$.
We introduce $\mu=1/\sigma$  and treat $\eps$ and $\mu$ as independent small parameters.
Assume $u^+_\eps$ and $u^-_\eps$  have double asymptotic expansions
\[
u^\pm_\eps(\bx)=\sum_{m,n=0}^\infty u^\pm_{m,n}(\bx)\eps^m\mu^n,  \quad \bx\in\dom^\pm,
\]
and introduce the notation
\begin{equation}\label{eqn:subansatz}
u^\pm_{m,\cdot}(\bx)=\sum_{n=0}^\infty u^\pm_{m,n}(\bx)\mu^n, \quad \bx\in\dom^\pm,
\end{equation}

From the condition \eqref{eqn:unjumpcond} we obtain
\begin{equation}
\sqbk{u_{0,\cdot}}=0,\quad \sqbk{u_{1,\cdot}}=-h\sqbk{{\partial_{\bn}}u_{0,\cdot}} \quad \text{on } {\Gamma},\label{eqn:u0+1bdc}
\end{equation}

Substituting  
\eqref{eqn:subansatz} into \eqref{eqn:u0+1bdc} 
and
matching the terms with the same order of $\mu$ yield that
\begin{align}
&u^-_{0,n}=u^+_{0,n},   \quad n\geq 0,\label{eqn:u^-_{0,n}bdc}\\
&u^-_{1,n}=u^+_{1,n}+h\sqbk{{\partial_{\bn}}u_{0,n}}  \quad n\geq 0.
\label{eqn:u^-_{1,n}bdc}
\end{align}
For the piecewisely homogeneous case of  $\sigma$ considered here, 
 \eqref{eqn:u0weak} and \eqref{eqn:u1weak} become  that   for all $v\in H_0^1(\dom)$,
\begin{align}
&\int_{\dom^+} \nabla u^+_{0,\cdot} \cdot \nabla v \,\rd\bx +
\int_{\dom^-} \sigma \nabla u^-_{0,\cdot} \cdot \nabla v \,\rd\bx=
\int_{\dom} f v\,\rd\bx;\label{eqn:u_{0,}weak}\\
&\int_{\dom^+} \nabla u^+_{1,\cdot} \cdot \nabla v \,\rd\bx +
\int_{\dom^-} \sigma\nabla u^-_{1,\cdot} \cdot \nabla v \,\rd\bx\nonumber\\
&- \int_{\Gamma}h\bigl[(\nabla_{\Gamma} u^+_{0,\cdot}-\sigma \nabla_{\Gamma} u^-_{0,\cdot}) \cdot \nabla_\Gamma v
+(\partial_{\bn}u^+_{0,\cdot}-\sigma \partial_{\bn} u^-_{0,\cdot})\partial_{\bn} v\bigr]\,\rd S_{\Gamma}=0. \label{eqn:u_{1,}weak}
\end{align}
Substituting  
\eqref{eqn:subansatz} into 
\eqref{eqn:u_{0,}weak} and \eqref{eqn:u_{1,}weak},
we have that   for all $v\in H_0^1(\dom)$,
\begin{equation}\label{eqn:u_{0,0}weak}
\int_{\dom^-} \nabla u^-_{0,0} \cdot \nabla v \,\rd\bx=0,
\end{equation}
\begin{equation}\label{eqn:u_{1,0}weak}
\int_{\dom^-} \nabla u^-_{1,0} \cdot \nabla v \,\rd\bx=
\int_{\Gamma}h\bigl[\nabla_{\Gamma} u^-_{0,0} \cdot \nabla_\Gamma v
+\partial_{\bn} u^-_{0,0}\partial_{\bn} v\bigr]\,\rd S_{\Gamma},
\end{equation}
and for $n\geq 0$,
\begin{align}
&\int_{\dom^+} \nabla u^+_{0,n} \cdot \nabla v \,\rd\bx +
\int_{\dom^-} \nabla u^-_{0,n+1} \cdot \nabla v \,\rd\bx=
\delta_{0,n}\int_{\dom} f v\,\rd\bx;\label{eqn:u_{0,n}weak1}\\
&\int_{\dom^+} \nabla u^+_{1,n} \cdot \nabla v \,\rd\bx +
\int_{\dom^-} \nabla u^-_{1,n+1} \cdot \nabla v \,\rd\bx\nonumber\\
&- \int_{\Gamma}h\bigl[(\nabla_{\Gamma} u^+_{0,n}-\nabla_{\Gamma} u^-_{0,n+1}) \cdot \nabla_\Gamma v
+(\partial_{\bn}u^+_{0,n}- \partial_{\bn} u^-_{0,n+1})\partial_{\bn} v\bigr]\,\rd S_{\Gamma}=0. \label{eqn:u_{1,n}weak1}
\end{align}

From   \eqref{eqn:u^-_{0,n}bdc} and the weak formulation
\eqref{eqn:u_{0,0}weak} \eqref{eqn:u_{0,n}weak1}, we have
the following PDEs for each term:
\begin{equation}\label{eqn:u^-_00PDE}
\begin{cases}
& -\Delta u^-_{0,0}=0 \quad \text{in } \dom^-,\\
& \partial_{\bn}u^-_{0,0}=0 \quad \text{on }\Gamma,\\
& u^-_{0,0}=g\quad \text{on }\partial\dom^-\cap\partial\dom;
\end{cases}
\end{equation}
and for $n\geq 0$,
\begin{equation}\label{eqn:u^+_0nPDE}
\begin{cases}
& -\Delta u^+_{0,n}=\delta_{0,n}f \quad \text{in } \dom^+,\\
&u^+_{0,n}= u^-_{0,n} \quad \text{on }\Gamma,\\
& u^+_{0,n}=\delta_{0,n}g\quad \text{on }\partial\dom^+\cap\partial\dom;
\end{cases}
\end{equation}
and for $n\geq 1$,
\begin{equation}\label{eqn:u^-_0nPDE}
\begin{cases}
& -\Delta u^-_{0,n}=\delta_{1,n}f \quad \text{in } \dom^-,\\
& \partial_{\bn}u^-_{0,n}=\partial_{\bn}u^+_{0,n-1} \quad \text{on }\Gamma,\\
& u^-_{0,n}=0\quad \text{on }\partial\dom^-\cap\partial\dom.
\end{cases}
\end{equation}

We  also list the PDEs for the terms with $m=1$:
\begin{equation*}
\label{eqn:u^-_10PDE}
\begin{cases}
& -\Delta u^-_{1,0}=0 \quad \text{in } \dom^-,\\
& \partial_{\bn}u^-_{1,0}=-\nabla_\Gamma\cdot(h\nabla_\Gamma u^-_{0,0})\quad \text{on }\Gamma,\\
& u^-_{1,0}=0\quad \text{on }\partial\dom^-\cap\partial\dom;
\end{cases}
\end{equation*}
and for $n\geq 0$,
\begin{equation*}
\begin{cases}
& -\Delta u^+_{1,n}=0 \quad \text{in } \dom^+,\\
&u^+_{1,n}= u^-_{1,n}-h(\partial_{\bn}u^+_{0,n}-\partial_{\bn}u^-_{0,n})\quad \text{on }\Gamma,\\
& u^+_{1,n}=0\quad \text{on }\partial\dom^+\cap\partial\dom;
\end{cases}
\end{equation*}
and
\begin{equation*}
\begin{cases}
& -\Delta u^-_{1,n}=0 \quad \text{in } \dom^-,\\
& \partial_{\bn}u^-_{1,n}=\partial_{\bn}u^+_{1,n-1}
+\nabla_{\Gamma}\cdot\bigl[h(\nabla_{\Gamma} u^+_{0,n-1}-\nabla_{\Gamma} u^-_{0,n})\bigr] \quad \text{on }\Gamma,\\
& u^-_{1,n}=0\quad \text{on }\partial\dom^-\cap\partial\dom.
\end{cases}
\end{equation*}

\medskip
Here we need to pay attention to  a special  situation 
that  $\partial\dom^-=\Gamma$, or equivalently, $\partial\dom^-\cap\partial\dom=\emptyset$.
Refer to the right panel in Figure \ref{fig:interface perturbation}.
The boundary value problems above then may  become  Neumann problems, which are uniquely solvable only up to an arbitrary constant.
To determine those constants, as we have done in Section \ref{subsubsec:Case 2.2}, we  
need  the solvability condition from the next order. For $m=0$, and $n\geq 1$,  the solvability condition for  \eqref{eqn:u^-_0nPDE}   reads 
\begin{equation}
\label{425}
\int_{\Gamma}\partial_{\bn}u^+_{0,n-1}\,\rd S_{\Gamma}=-\delta_{1,n}\int_{\dom^-}f\,\rd \bx.
\end{equation}
 \eqref{eqn:u^-_00PDE} shows that $u^-_{0,0} \equiv C_0$.
 To determine $C_0$, we  need to look at $u^+_{0,0}$, which  satisfies   
\begin{equation}
\begin{cases}
& -\Delta u^+_{0,0}=f \quad \text{in } \dom^+,\\
&u^+_{0,0}= C_0 \quad \text{on }\Gamma,\\
&u^+_{0,0}= g \quad \text{on }\partial \dom,\\  
\end{cases}
\end{equation}
 by \eqref{eqn:u^+_0nPDE}.
By the solvability condition \eqref{425}, we have
$ \int_{\Gamma}\partial_{\bn}u^+_{0,0}\,\rd S_{\Gamma}=-\int_{\dom^-}f\,\rd \bx$,
which uniquely determines the constant 
\[ C_0 = -\frac{
 \int_{\dom^-} f +  \int_{\Gamma}(\partial_{\bn} \phi_2  +\partial_{\bn} \phi_3)
 }
{ \int_{\Gamma}\partial_{\bn} \phi_1}
,\]
where $\phi_i$   solve the following equations, respectively,
\begin{equation}
\begin{cases}
& -\Delta \phi_1=0 \quad \text{in } \dom^+,\\
& \phi_1= 1 \quad \text{on }\Gamma,\\
&\phi_1= 0 \quad \text{on }\partial \dom,\\  
\end{cases}
~~ 
\begin{cases}
& -\Delta \phi_2=f \quad \text{in } \dom^+,\\
& \phi_2= 0 \quad \text{on }\Gamma,\\
&\phi_2= 0 \quad \text{on }\partial \dom,\\  
\end{cases}
~~ 
\begin{cases}
& -\Delta \phi_3=0 \quad \text{in } \dom^+,\\
& \phi_3= 0 \quad \text{on }\Gamma,\\
&\phi_3= g \quad \text{on }\partial \dom.\\  
\end{cases}
\end{equation}

\bigskip
\noindent \textbf{Acknowledgment.} J. Chen acknowledges support from National Natural Science Foundation of China grant 21602149.
L. Lin and X. Zhou acknowledge the financial support of Hong Kong GRF (109113, 11304314, 11304715).
Z. Zhang acknowledges the financial support of Hong Kong RGC grants (27300616, 17300817) and
National Natural Science Foundation of China via grant 11601457.
J. Chen would like to thank the hospitality of Department of Mathematics, City University of Hong Kong where part of the work was done.

\providecommand{\bysame}{\leavevmode\hbox to3em{\hrulefill}\thinspace}
\providecommand{\MR}{\relax\ifhmode\unskip\space\fi MR }
\providecommand{\MRhref}[2]{%
  \href{http://www.ams.org/mathscinet-getitem?mr=#1}{#2}
}
\providecommand{\href}[2]{#2}

 \appendix

\bigskip
 \section{Examples and generalizations for Section \ref{sec:2}}
\label{sec:example}

\subsection{Two examples}
\label{ssub:Twoex}
The following two 2D examples demonstrate
how the explicit form of the operator $F_{k,\Gamma,\Lo,f}$ can be obtained
in Lemma \ref{lem:convert}.

\begin{example}\label{ex:rectangle domain}
Consider
\[
\Lo u = -\sigma\partial_{x}^2 u-\sigma\partial_{y}^2u+cu
\]
over $\dom = (0,~L_1)\times(0,~L_2)$ with constants $\sigma>0$, $c\geq 0$
and the homogeneous Dirichlet boundary condition $g=0$.
Suppose the domain perturbation is only applied to the right boundary $\Gamma = \set{(L_1,y): 0\leq y\leq L_2}$
in the following form
 \begin{equation} \label{def:Gma}
 \Gamma_\eps = \set{(L_1+\eps h(y),y):0\leq y\leq L_2}.
 \end{equation}
Then, $\partial_{\bn}^k=\partial_x^k$, $\forall k\geq 0$.
The conversion of all partial derivatives of a function $w(x,y)$ with respect
to $x$ with $k\geq 2$  to the partial derivatives with respect to $y$ relies on
the repeated use of the partial differential equation
\begin{equation}\label{eqn:wphi}
\Lo w=-\sigma\partial_{x}^2w-\sigma\partial_{y}^2w+cw
=\phi.
\end{equation}
Explicit forms when $k=2$ and $k=3$ are as follows.
\begin{enumerate}
\item[$k=2$:] From \eqref{eqn:wphi}, we have
\begin{equation}
\label{eqn:dx2}
\partial_{x}^2w=-\frac \phi{\sigma}+ \frac {cw}{\sigma}-\partial_{y}^2w,
\end{equation}
which implies
\[
F_{2,\phi}[g_0,g_1]=-\frac \phi{\sigma}+ \frac {cg_0}{\sigma}-\partial_{y}^2g_0.
\]
\item[$k=3$:]
Differentiating \eqref{eqn:wphi} with respect to $x$ yields
\[
-\sigma\partial_{x}^3w-\sigma\partial_{y}^2\partial_xw+c\partial_x w
=\partial_x \phi,
\]
and thus
\begin{equation}
\label{eqn:dx3}
\partial_{x}^3w=\frac{1}{\sigma}\left(-\sigma\partial_{y}^2\partial_xw+c\partial_x w
-\partial_x\phi\right),
\end{equation}
which implies
\[
F_{3,\phi}[g_0,g_1]=\frac{1}{\sigma}\left(-\sigma\partial_{y}^2g_1+cg_1
-\partial_x\phi\right).
\]
\end{enumerate}

Using the above formulas, we can convert all partial derivatives of $u_0$ and $u_1$ with $k=2$
and $k=3$ in \eqref{eqn:w20z} and \eqref{eqn:w30z} to the partial derivatives with respect to $y$
with the following explicit forms.
\begin{enumerate}
\item[$k=2$:] Using $u_0(L_1,y)=0, \;0\leq y\leq L_2$ for $u_0$ in \eqref{eqn:dx2},  we have
\[
\partial_{x}^2u_0(L_1,y) = -\frac{1}{\sigma} f(L_1,y).
\]
Solving $\Lo u_1=0$ produces
\[
\partial_x^2 u_1(L_1,y) = \frac{cu_1(L_1,y)}{\sigma}
- \partial^2_y u_1(L_1,y).
\]
\item[$k=3$:]Substituting $u_0(x,y)$ into \eqref{eqn:dx3} and evaluating at $x=L_1$ yield
\[
\partial_{x}^3u_0(L_1,y)=\frac{1}{\sigma}\left(-\sigma\partial_{y}^2\partial_xu_0(L_1,y)+c\partial_xu_0(L_1,y)
-\partial_x f(L_1,y)\right).
\]
\end{enumerate}
%
\end{example}

\begin{example}\label{ex:Laplace}
Consider
$$\Lo=-\nabla\cdot\bigl(\sigma(\bx)\nabla)\bigr)=-\sigma\Delta-\nabla\sigma\cdot\nabla$$
in 2D with the scalar-valued {smooth} function $\sigma>0$ and set $g=0$.
Assume the 1D boundary $\partial \dom$ has a parametrization
by the arc length $s\mapsto\btheta(s)$.  Then at each point $\btheta(s)$, the unit tangent vector $\btau(s)$ is $\btheta'(s)$ and the
curvature $\kappa(s)$ is defined as
\[ \btau'(s)=-\kappa(s)\bn(s), ~\text{ equivalently, }~\kappa(s)=\btau(s)\cdot \bn'(s).\]
In a sufficiently small tubular neighborhood of $\partial \dom$,
the curvilinear coordinates $(s,t)$ are uniquely defined by $\bx=\btheta(s)+t\bn(s)$.
The gradient and the Laplace operators in curvilinear coordinates are
\[
\nabla=\btau(s)\frac{1}{1+t\kappa(s)}\partial_s+\bn(s)\partial_t,
\]
and
\[
\Delta=\frac{1}{1+t\kappa(s)}\partial_s\left(\frac{1}{1+t\kappa(s)}\partial_s\right)+\frac{\kappa(s)}{1+t\kappa(s)}\partial_t+\partial_t^2,
\]
respectively.

Then the operator $\Lo$ has the new form in terms of $(s,t)$,
\begin{equation}\label{eqn:lapst}
\begin{split}
\Lo u= & -\frac{\sigma}{1+t\kappa(s)}\partial_s\left(\frac{1}{1+t\kappa(s)}\partial_s u\right)
-\frac{\sigma \kappa(s)}{1+t\kappa(s)}\partial_t
u
\\
& -\sigma \partial_t^2 u
-\frac{\partial_t \sigma \partial_s u }{1+t\kappa(s)}-\partial_{\bn}\sigma \partial_t u.
\end{split}
\end{equation}
Note that  $\partial_{\bn}^k=\partial_{t}^k$, $\forall k\geq 0$.

Explicit forms for $F_{2,\phi}[\cdot,\cdot]$ and $F_{3,\phi}[\cdot,\cdot]$ are as follows.
\begin{enumerate}
\item [$k=2$:] Consider  the equation $\Lo w=\phi$ on $\partial\dom$
(i.e.,  $t=0$ )
\[
-\sigma(\partial^2_s w+\kappa\partial_{\bn}w+\partial_{\bn}^2w)-\partial_s\sigma\partial_sw-\partial_{\bn}\sigma\partial_{\bn}w
=\phi,
\]
which implies
\begin{equation}\label{eqn:partialbn2w}
\partial_{\bn}^2w=-\frac\phi\sigma-\frac{\partial_s\sigma}{\sigma}\partial_{s}w
-\frac{\partial_{\bn}\sigma}{\sigma}\partial_{\bn}w-\partial^2_s w-\kappa\partial_{\bn}w.
\end{equation}
Thus
\[
F_{2,\phi}[g_0,g_1]=-\frac\phi\sigma-\frac{\partial_s\sigma}{\sigma}\partial_{s}g_0
-\frac{\partial_{\bn}\sigma}{\sigma}g_1-\partial^2_s g_0-\kappa g_1.
\]
\item [$k=3$:] For simplicity we assume  constant coefficient $\sigma$
in the following calculation. Differentiating the equation $\Lo w=\phi$ with respect to $t$ at $t=0$ yields
\[
\begin{split}
&\sigma(\kappa'\partial_s+2\kappa\partial_s^2
-\partial_s^2\partial_t
+\kappa^2\partial_t-\kappa\partial_t^2-\partial_t^3)w=
\partial_t\phi.
\end{split}
\]
Thus,
\begin{equation}\label{eqn:partialbn3w}
\begin{split}
\partial_{\bn}^3 w=&(\kappa'\partial_s+2\kappa\partial_s^2
-\partial_s^2\partial_{\bn}
+\kappa^2\partial_{\bn}-\kappa\partial_{\bn}^2)w-\frac{\partial_{\bn}\phi}{\sigma}\\
=&(\kappa'\partial_s+3\kappa\partial_s^2
-\partial_s^2\partial_{\bn}
+2\kappa^2\partial_{\bn})w-\frac{\partial_{\bn}\phi}{\sigma}
+\frac{\kappa\phi}\sigma.
\end{split}
\end{equation}
In the last equality, we use \eqref{eqn:partialbn2w} for $\partial_{\bn}^2w$.
Therefore
\[
F_{3,\phi}[g_0,g_1]=(\kappa'\partial_s+3\kappa\partial_s^2)g_0+(2\kappa^2-\partial_s^2)g_1
+\frac{\kappa-\partial_{\bn}}{\sigma}\phi.
\]
\end{enumerate}

Using the above formulas, we can convert all partial derivatives of $u_0$ and $u_1$ with $k=2$
and $k=3$ in \eqref{eqn:w20z} and \eqref{eqn:w30z} to the partial derivatives with respect to $y$
with the following explicit forms
\begin{enumerate}
\item[$k=2$:] On $\partial\dom$, from \eqref{eqn:partialbn2w}, we have
\begin{equation*}\label{eqn:dn2u0}
\partial_{\bn}^2u_0 = -\frac{f}\sigma-\frac{\partial_{\bn}\sigma}{\sigma}\partial_{\bn}u_0-\kappa\partial_{\bn}u_0.
\end{equation*}
Thus the boundary condition \eqref{eqn:w20z}  for $u_2$ on $\partial\dom$ is  reduced to
\begin{equation}\label{eqn:u2bdcr}
u_2=-h\partial_{\bn}u_1+\frac{h^2}{2}\Bigl(\frac{\partial_{\bn}\sigma}{\sigma}+\kappa\Bigr)\partial_{\bn}u_0+\dfrac{h^2f}{2\sigma}.
\end{equation}
Similarly, for $n\geq 1$, we have on $\partial\dom$
\begin{equation*}\label{eqn:dn2un}
\partial_{\bn}^2u_n = -\kappa\partial_{\bn}u_n-\frac{\partial_{\bn}\sigma}{\sigma}\partial_{\bn}u_n
-\frac{\partial_{s}\sigma}{\sigma}\partial_{s}u_n-\partial^2_s u_n.
\end{equation*}
\item [$k=3$:] From \eqref{eqn:partialbn3w}, we have on $\partial\dom$
\[
\begin{split}
\partial_{\bn}^3 u_n=&F_{3,\delta_{0,n}f}[u_n,\partial_{\bn}{u_n}] \\
=&\delta_{0,n}\frac{\kappa-\partial_{\bn}}{\sigma}f
+(1-\delta_{0,n})\Bigl(\kappa'\partial_s+3\kappa\partial_s^2
\Bigr)u_n
+\Bigl(2\kappa^2
-\partial_s^2\Bigr)\partial_{\bn}u_n.
\end{split}
\]
\end{enumerate}
\end{example}

\subsection{Neumann boundary conditions}

If the Neumann boundary condition $\partial_{\bn} u_\eps=g$
rather than the Dirichlet boundary condition is prescribed on the boundary $\partial\dom_\eps$ for the equation  \eqref{eqn:uePDE},
the above method in $\S$\ref{sssec:aymp}  still works straightforwardly.
$w_\eps=g$  in \eqref{eqn:wBC} becomes $\partial_{\bn}w_\eps=g$ on $\partial\dom_\eps$ now.
 So  \eqref{eqn:swnbdry} becomes
   \[
   \sum_{n=0}^\infty  \eps^n \partial_{\bn}w_{n}\bigl(\bx+\eps h(\bx)\bn(x)\bigr)=g\bigl(\bx+\eps h(\bx)\bn(x)\bigr) \quad \text{for } \bx
  \in\partial\dom.
  \]
   The Taylor expansions  \eqref{eqn:Taylor} and \eqref{eqn:gTaylor} are still applicable along the normal direction $\bn$
   and the new conditions corresponding to \eqref{eqn:wmbdc} can be obtained   by
   following the previous procedure there.

For ease of exposition, let us just show the specific forms for   Example \ref{ex:rectangle domain}
when  the homogeneous Neumann boundary condition is imposed on $\Gamma_\eps$ defined in \eqref{def:Gma}.
The unit normal vector $\bn(x,y)$ on $\Gamma_\eps$    parallels to $(1,-\eps h'(y))$,
thus $\sum_{n=0}^\infty  \eps^n \partial_{\bn}w_{n}(x,y)=0 $ on $\Gamma_\eps$ is written as
\[
\sum_{n=0}^\infty  \eps^n \Bigl(\partial_x w_{n}\bigl(L_1+\eps h(y),y\bigr)-\eps h'(y) \partial_y w_{n}\bigl(L_1+\eps h(y),y\bigr)\Bigr)=0.
\]
Then the Taylor expansion  gives arise to
\[
\begin{split}
 &\sum_{m=0}^\infty   \eps^{m}  \sum_{k=0}^m   \frac{\bigl(h(y)\bigr)^k}{k!}\partial_x^{k+1} w_{m-k}(L_1,y)
=
\\
&~\qquad\sum_{m=1}^\infty  \eps^{m} \sum_{k=0}^{m-1} \frac{\bigl(h(y)\bigr)^k}{k!} h'(y)  \partial_x^k \partial_y  w_{m-1-k}(L_1,y) .
\end{split}
\]
Then after matching each order $\eps^m$, we have that
\begin{equation}\label{eqn:w0nu}
\partial_x w_0(L_1,y) = 0,
\end{equation}
and for $ m\geq 1$,
\[
  \sum_{k=0}^m   \frac{\bigl(h(y)\bigr)^k}{k!}\partial_x^{k+1} w_{m-k}(L_1,y)
=   \sum_{k=0}^{m-1} \frac{\bigl(h(y)\bigr)^k}{k!} h'(y)  \partial_x^k \partial_y  w_{m-1-k}(L_1,y).
\]
In particular, the boundary conditions for $m=1,2$ are
\begin{align*}
& \partial_x w_1 +h(y)\partial_x^2 w_0 -
h'(y) \partial_y w_0=0,  \\
&
\partial_x w_2  +h(y)\partial_x^2 w_1 +
\frac12 \bigl(h(y)\bigr)^2\partial_x^{3} w_0
- h'(y)\partial_y w_1-h(y)h'(y)\partial_x\partial_y w_0=0.
\end{align*}

\subsection{Nonlinear equations}
For some nonlinear partial differential equations, we may still
use the above Taylor expansion method in Section \ref{sssec:aymp}
to derive a sequence of  $u_n$ in the asymptotic expansion.
We illustrate this generalization by the following example.
\begin{example}
Consider the following nonlinear   equation with Dirichlet boundary condition:
\[
\begin{cases}
& - \Delta u_\eps+u_\eps-u_\eps^3=f \quad \text{in } \dom_\eps,\\
& u_\eps=0 \quad \text{on } \partial\dom_\eps,
\end{cases}
\]
Assume  the ans\"{a}tz  as before
$
u_\eps=\sum_{n=0}^\infty \eps^n u_n,
$
then
\[
\sum_{n=0}^\infty \eps^n(-\Delta u_n+u_n)-\biggl(\sum_{n=0}^\infty \eps^n u_n\biggr)^3=f.
\]
Successively equating coefficients of like powers $\eps^n$ yields
\[
-\Delta u_0+u_0-u_0^3=f,
\]
and for $n\geqslant 1$,
\[
-\Delta u_n+u_n-3u_0^2u_n=\sum_{{i_0,i_1,\cdots,i_{n-1}\geqslant 0}\atop{{i_0+i_1+\cdots+i_{n-1}=3}\atop{i_1+2i_2+\cdots+(n-1)i_{n-1}}=n}}
\frac{3!}{i_0!~i_1!\cdots i_{n-1}!}u_0^{i_0}u_1^{i_1}\cdots u_{n-1}^{i_{n-1}}.
\]
In particular, the equations for the first few terms are
\begin{align}
&-\Delta u_1+u_1-3u_0^2u_1=0,\\
&-\Delta u_2+u_2-3u_0^2u_2=3u_0u_1^2,\\
&-\Delta u_3+u_3-3u_0^2u_3=6u_0u_1u_2+u_1^3.
\end{align}
Note that all these equations for $n\geq 1$ are linear.
The boundary conditions on $\partial\dom$ for each $u_n$ are
exactly the same as in \eqref{PDE:un}.

\end{example}

\section{Collection of proofs}
\label{sec:proof}

\subsection{Proof of Theorem \ref{thm:error} }\label{sec:proof1}

The proof will rely on the following result:
\begin{lemma}\label{lem:est}
Let us assume in addition to  Assumption \ref{asmp}, that $\Lo w=0$ in $\dom$.
Then
\[
\norm{w}_{H^m(\dom)} \leq C \Bigl((1-\delta_{0,m})\norm{w}_{L^2(\dom)}+\norm{w}_{H^{m}(\partial\dom)}\Bigr),
\]
where $C$ depends only on $d$, $m$, $\partial\dom$ and the coefficients $a^{ij}$, $b^i$, $c$.
\end{lemma}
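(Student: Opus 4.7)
The plan is to split into the two cases $m=0$ and $m\geq 1$ distinguished by the indicator $(1-\delta_{0,m})$ on the right-hand side, since these are of fundamentally different analytic character.

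For $m\geq 1$, I would invoke the classical global elliptic regularity estimate for the Dirichlet problem. Since $\Lo w=0$ in $\dom$ with trace $g:=w|_{\partial\dom}\in H^m(\partial\dom)$, Theorem 8.13 of \cite{Gilbarg:2001} (or the corresponding result in \cite{Evans:1998}, \S6.3.2, adapted to inhomogeneous Dirichlet data) yields
\[
\norm{w}_{H^m(\dom)}\leq C\bigl(\norm{w}_{L^2(\dom)}+\norm{g}_{H^{m-1/2}(\partial\dom)}\bigr),
\]
where $C$ depends only on $d$, $m$, $\partial\dom$, and the $C^{m-1}$-norms of the coefficients $a^{ij}$, $b^i$, $c$. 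The standard proof lifts $g$ to some $G\in H^m(\dom)$ with $\norm{G}_{H^m(\dom)}\leq C\norm{g}_{H^{m-1/2}(\partial\dom)}$ and then bootstraps interior-plus-boundary $H^k$ regularity for $w-G\in H_0^1(\dom)$ up to $k=m$, leaning on the $C^\infty$ regularity of $\partial\dom$ from Assumption \ref{asmp}. Since $H^m(\partial\dom)$ embeds continuously into $H^{m-1/2}(\partial\dom)$ (see e.g. \cite{Marschall:1987}), the claimed estimate follows.

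For $m=0$ the claim is the ``very weak'' bound $\norm{w}_{L^2(\dom)}\leq C\norm{w}_{L^2(\partial\dom)}$, which is not accessible through energy methods; I would use a duality argument. For arbitrary $\phi\in L^2(\dom)$, let $\psi$ be the unique solution of the adjoint Dirichlet problem $\Lo^*\psi=\phi$ in $\dom$ and $\psi=0$ on $\partial\dom$. Since $\Lo^*$ is elliptic with the same smoothness as $\Lo$, and unique solvability is guaranteed by the Fredholm alternative together with the hypothesis $c\geq 0$ (which rules out $0$ as an eigenvalue), the $m=2$ case of the first step applied to $\Lo^*$ gives $\psi\in H^2(\dom)$ with $\norm{\psi}_{H^2(\dom)}\leq C\norm{\phi}_{L^2(\dom)}$. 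Integrating $\int_\dom w\,\Lo^*\psi\,\rd\bx$ by parts twice, using $\Lo w=0$ and $\psi|_{\partial\dom}=0$, cancels all interior contributions and produces a boundary integral of the form
\[
\int_\dom w\phi\,\rd\bx=-\int_{\partial\dom} w\sum_{i,j=1}^d a^{ij}n_i\,\partial_{x_j}\psi\,\rd S+\text{(lower-order boundary terms)},
\]
where the lower-order terms come from the $b^i\partial_{x_i}$ part of $\Lo$ and also involve only traces of $w$ and $\psi$. The trace theorem gives $\norm{\nabla\psi|_{\partial\dom}}_{L^2(\partial\dom)}\leq C\norm{\psi}_{H^2(\dom)}\leq C\norm{\phi}_{L^2(\dom)}$, so the full right-hand side is bounded by $C\norm{w}_{L^2(\partial\dom)}\norm{\phi}_{L^2(\dom)}$. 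Taking the supremum over $\phi$ with $\norm{\phi}_{L^2(\dom)}\leq 1$ yields the required $L^2(\dom)$ bound on $w$.

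The main obstacle I anticipate is the bookkeeping of the boundary terms in the duality argument in the presence of the variable lower-order coefficients $b^i$ and $c$: one must carefully verify that every interior integral indeed cancels after integration by parts, leaving only boundary integrals controllable by $\norm{w}_{L^2(\partial\dom)}$ through the trace theorem. A secondary but essential point is that the solvability and uniqueness of both the original Dirichlet problem (used to make the $m\geq 1$ estimate meaningful) and the adjoint Dirichlet problem (used in the duality step) rely on the combination of strict ellipticity and $c\geq 0$ supplied by Assumption \ref{asmp}; dropping either would force a shift or a quotient by a finite-dimensional kernel.
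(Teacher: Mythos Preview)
Your proposal is correct. For $m\geq 1$ it coincides with the paper's argument: both lift the boundary trace to an $H^m(\dom)$ function (the paper via \cite{Marschall:1987}, you implicitly via the same right inverse of the trace) and then invoke Theorem~8.13 of \cite{Gilbarg:2001} together with the embedding $H^m(\partial\dom)\hookrightarrow H^{m-1/2}(\partial\dom)$.

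For $m=0$ the routes diverge. The paper treats this case in the same breath by citing Corollary~8.7 of \cite{Gilbarg:2001} alongside the extension, without spelling out how the $L^2(\dom)$--$L^2(\partial\dom)$ bound actually emerges. Your transposition argument is the standard ``very weak solution'' approach and is more explicit: Green's identity with the adjoint solution $\psi$ (vanishing on $\partial\dom$) kills precisely the boundary terms containing $\partial_{\bn}w$ and the lower-order term $b^i n_i w\psi$, leaving only $-\int_{\partial\dom} w\,a^{ij}n_i\partial_{x_j}\psi\,\rd S$, which is controlled as you describe. Your remark that Fredholm plus $c\geq 0$ guarantees solvability of the adjoint problem is the right justification (injectivity of $\Lo$ with homogeneous Dirichlet data transfers to $\Lo^*$ by Fredholm, even though the zeroth-order coefficient of $\Lo^*$ need not itself be non-negative). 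The trade-off: the paper's version is a two-line citation, while yours is self-contained and makes the dependence of $C$ on the coefficients visible.
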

\begin{proof}
By Theorem 3 in \cite{Marschall:1987}, there exists $\phi\in H^{m}(\dom)$
for which $\phi=w$ on $\partial\dom$ and
\[
\norm{\phi}_{H^{m}(\dom)}\leq C\norm{w}_{H^{m-\frac12}(\partial\dom)}\leq C\norm{w}_{H^{m}(\partial\dom)}.
\]
Then using Corollary 8.7 and Theorem 8.13 in \cite{Gilbarg:2001}, we have
\[
\norm{w}_{H^m(\dom)} \leq C \Bigl((1-\delta_{0,m})\norm{w}_{L^2(\dom)}+\norm{\phi}_{H^{m}(\dom)}\Bigr).
\]
Combining the above two inequalities yields the asserted inequality.
\end{proof}

\begin{proof}[Proof of Theorem \ref{thm:error}]
The boundary condition $u_\eps=g$ on $\partial{\dom_\eps}$ can be rewritten as
\[
u_\eps\bigl(\bx+\eps h(\bx)\bn(\bx)\bigr)=g\bigl(\bx+\eps h(\bx)\bn(\bx)\bigr),\quad \forall \bx \in \partial\dom.
\]
By the Taylor expansion to the $(n+1)$-th order, we have
\[
u_{\eps}(\bx)=\sum_{k=0}^n\frac{\eps^k\bigl(h(\bx)\bigr)^k}{k!}{\partial_{\bn}^k}g(\bx) -\sum_{k=1}^n\frac{\eps^k\bigl(h(\bx)\bigr)^k}{k!}{\partial_{\bn}^k}u_\eps(\bx)+\Od(\eps^{n+1}),\quad \bx\in\partial\dom.
\]
Thus by using \eqref{eqn:vnbc}, we find on $\partial\dom$
\begin{equation}\label{eqn:vn-ueps}
v^{[n]}-u_\eps=\sum_{k=1}^n\frac{\eps^k\bigl(h(\bx)\bigr)^k}{k!}{\partial_{\bn}^k}\bigl(u_\eps-v^{[n-k]}\bigr)+\Od(\eps^{n+1}).
\end{equation}
Then \eqref{eqn:vn err} can be proved by induction on $n$. Actually, for $n=0$, \eqref{eqn:vn err} follows from
\eqref{eqn:vn-ueps} and Lemma \ref{lem:est}.
Now suppose we have proved \eqref{eqn:vn err} for all $k<n$, then  by trace inequality, we would have
\[
\norm{\partial_{\bn}^k\bigl(u_\eps-v^{[n-k]}\bigr)}_{H^m(\partial\dom)}=\Od(\eps^{n-k+1}),\quad \forall m\geq 0.
\]
Plugging these into \eqref{eqn:vn-ueps} yields
\begin{equation*}\label{eqn:vn err bdry}
\bigl\| v^{[n]} -u_\eps \bigr\|_{H^m(\partial\dom)} = \Od (\eps^{n+1}).
\end{equation*}
Therefore \eqref{eqn:vn err} follows by Lemma \ref{lem:est}. This completes the induction and
the proof of \eqref{eqn:vn err}.
\end{proof}

\subsection{Proof of Lemma \ref{lem:dnuetodnui}}\label{sec:proof2}
\begin{proof}
Let $\vec e_i$, $i=1,\cdots, d$, denote the standard basis for $\RR^d$.
We first note that every partial derivative $\partial_{x_i}$, $i=1,\cdots, d$,  can be expressed
in terms of the unit normal vector $\bn=(n_1,\cdots,n_d)$,
the normal derivative $\partial_{\bn}$ and a tangential derivative $\partial_{\btau_i}$ along a certain tangent vector
$\btau_i=\vec e_i-n_i\bn$. In fact,
it is clear that every $\btau_i$, $i=1,\cdots, d$, is a tangent vector, since $\btau_i\cdot \bn=0$. Recall the elementary facts that $\partial_{x_i}$ may be understood as the directional derivative $\partial_{\vec e_i}$
and that the directional derivative $\partial_{\vec v}$ is a linear functional of a direction vector $\vec v$.
Thus we deduce
\[
\partial_{x_i}=\partial_{\vec e_i}=n_i\partial_{\bn}+\partial_{\btau_i}.
\]
Then \eqref{eqn:tranconddnun} can be written as
\[
\begin{split}
&\sum_{i,j=1}^d a^{ij}_\rin n_in_j\partial_{\bn} u_{\rin,n}+
\sum_{i,j=1}^d a^{ij}_\rin n_i\partial_{\btau_j} u_{\rin,n}\\
&= \sum_{i,j=1}^d a^{ij}_\ext n_in_j\partial_{\bn}u_{\ext,n}
+\sum_{i,j=1}^d a^{ij}_\ext n_i\partial_{\btau_j} u_{\ext,n}.
\end{split}
\]
By \eqref{eqn:trancondun}, $\partial_{\btau_j}u_{\ext,n}$ in the last sum amounts to $\partial_{\btau_j}u_{\rin,n}$.
Thus we can solve
\[
\partial_{\bn}u_{\ext,n}=\frac{\displaystyle\sum_{i,j=1}^d a^{ij}_\rin n_in_j\partial_{\bn} u_{\rin,n}+
\sum_{i,j=1}^d \left(a^{ij}_\rin-a^{ij}_\ext\right) n_i\partial_{\btau_j} u_{\rin,n}}{\displaystyle\sum_{i,j=1}^d a^{ij}_\ext n_in_j}.
\]
Note that  $\sum_{i,j=1}^d a^{ij}_\ext n_in_j\neq 0$ due to
 the elliptic condition \eqref{eqn:ellipcond}. The proof is complete.
\end{proof}

\subsection{Proof of  \eqref{eqn:intdeldomgen}}\label{sec:proof3}
To compute $\abs{J(\bxi,t)}$, we make use of the  Weingarten equations  in the differential geometry of hypersurfaces \cite{do1976differential}, which give
the linear expansions of  the derivatives of the unit normal vector $\bn$ to the hypersurface $\Gamma$
in  terms of the tangent vectors $\partial_{\xi_j}\btheta$, $j=1,\cdots, d-1$:
\begin{equation}\label{eqn:Weingarten}
\partial_{\xi_i}{\bn}=\sum_{j=1}^{d-1}W_i^{~j}\partial_{\xi_j}\btheta,  \quad i=1,\cdots, d-1,
\end{equation}
where $W=(W_i^{~j})$ is the matrix representation
of the so called shape operator or Weingarten map $\tensor W$,
and is given  
by
\[
W_i^{~j}=-\sum_{k=1}^{d-1}h_{ik}g^{kj},
\]
where $(h_{ik})$ is the matrix representation of the second fundamental form, and $(g^{kj})$
 is the inverse of the matrix representation $(g_{kj})$ of the first fundamental form,
and all the above matrix representations are with respect to the basis $\partial_{\xi_i}\btheta$, $i=1,\cdots, d-1$.
By \eqref{eqn:Weingarten}, we compute
\[
\partial_{\xi_i}\bx=\partial_{\xi_i}\btheta+t\partial_{\xi_i}{\bn}=\sum_{j=1}^{d-1}(\delta_i^{~j}+tW_i^{~j})\partial_{\xi_j}\btheta, \quad i=1,\cdots, d-1,
\]
where $\delta_i^{~j}$ is equal to 1 if $i=j$ and 0 otherwise. Thus we obtain for  sufficiently small $\abs{t}$,
\[
\begin{split}
\abs{J(\bxi,t)}\rd\bxi=&\abs{\det(I+tW)}\abs{\det(\partial_{\xi_1}\bx,\cdots,\partial_{\xi_{d-1}}\bx,\bn)}\rd\bxi\\
=&\det(I+tW)\rd S_{\Gamma}(\btheta),
\end{split}
\]
where $I$ denotes the identity matrix, and $\rd S_{\Gamma}(\btheta)$ denotes  the surface area element on the hypersurface $\Gamma$.
Consequently, \eqref{eqn:chofvar} becomes
\begin{equation} \int_{\delta\dom_\eps} F(\bx)\,\rd\bx=\int_{\Gamma}\biggl(\int_0^{\eps h(\btheta)}\tilde{F}(\btheta,t)\det(I+tW) \,\rd t\biggr)\rd S_\Gamma({\btheta}),
\end{equation}
where $\tilde{F}(\btheta,t):=F\bigl(\btheta+t\bn(\btheta)\bigr)$, $\btheta\in\Gamma$.

\section{Explicit formula of boundary conditions for low order terms
of two-parameter expansion in Section \ref{sec:4}}
\label{sec:case}

$\set{u_{\rin,m,n}}$ satisfy the equations
\[
-\Delta u_{\rin,m,n}=\delta_{0,m}\delta_{0,n}f_\rin \quad \text{in } {\dom}.
\]
Their boundary conditions for a few lower order are listed below.

\subsection{Case (i)}

The boundary conditions of the expansion $\set{u_{\rin,m,n}}$ on $\partial \dom$ for  the first a few terms ($m+n\leq 2$)  are listed below:
\begin{align*}
& u_{\rin,0,0}=g,~~\qquad ~~ u_{\rin,1,0}=h\partial_{\bn}g,\\
& u_{\rin,2,0}=\dfrac{h^2}2\partial_{\bn}^2g-\dfrac{h^2}2 F_{2,0}[g,0],
\end{align*}
and
\begin{align*}
& u_{\rin,0,1}=-h\partial_{\bn}u_{\rin,0,0},
~~\qquad~~ u_{\rin,0,2}=-h\partial_{\bn}u_{\rin,0,1},\\
& u_{\rin,1,1}
=-h\partial_{\bn}u_{\rin,1,0}-\dfrac{h^2}2F_{2,f_\ext}[0,\partial_{\bn}u_{\rin,0,0}].
\end{align*}

\subsection{Case (ii)$_1$}
The Neumann boundary conditions on $\partial \dom$ for $u_{\rin,m,n}$ with $m+n\leq 2$ read
\begin{align*}
& \partial_{\bn}u_{\rin,0,0}=0,
\qquad \partial_{\bn}u_{\rin,0,1}=\frac gh-\frac1h u_{\rin,0,0},\\
& \partial_{\bn}u_{\rin,0,2}=-\frac1h u_{\rin,0,1},
\qquad \partial_{\bn}u_{\rin,1,0}
=-\dfrac{h}{2}F_{2,f_\ext}[0,0],\\
& \partial_{\bn}u_{\rin,2,0}=-\dfrac{h}{2}F_{2,0}
[0,\partial_{\bn}u_{\rin,1,0}]-\dfrac{h^2}{6}F_{3,f_\ext}
[0,0],
\\
& \partial_{\bn}u_{\rin,1,1}=\partial_{\bn}g-\frac1h u_{\rin,1,0}-\dfrac{h}{2}F_{2,0}[0,\partial_{\bn}u_{\rin,0,1}].
\end{align*}

\subsection{Case (ii)$_2$}
The Neumann boundary conditions on $\partial\dom$ for $u_{\rin,m,n}$ with $m+n\leq 2$ read
\begin{align*}
& \partial_{\bn}u_{\rin,0,-1}=\partial_{\bn}u_{\rin,1,-1}=\partial_{\bn}u_{\rin,2,-1}=\partial_{\bn}u_{\rin,3,-1}=0,\\
& \partial_{\bn}u_{\rin,0,0}=-\dfrac 1h u_{\rin,0,-1},
\qquad
 \partial_{\bn}u_{\rin,0,1}=\frac gh-\dfrac 1h u_{\rin,0,0},
 \qquad \partial_{\bn}u_{\rin,0,2}=-\dfrac 1h u_{\rin,0,1},\\
& \partial_{\bn}u_{\rin,1,0}=-\dfrac 1h u_{\rin,1,-1}-\frac h2 F_{2,f_\ext}[0,\partial_{\bn}u_{\rin,0,0}],\\
& \partial_{\bn}u_{\rin,2,0}=-\dfrac 1h u_{\rin,2,-1}-\frac h2 F_{2,0}[u_{\rin,0,-1},\partial_{\bn}u_{\rin,1,0}]
-\frac {h^2}6 F_{3,f_\ext}[0,\partial_{\bn}u_{\rin,0,0}],\\
& \partial_{\bn}u_{\rin,1,1}=\partial_{\bn}g-\frac 1hu_{\rin,1,0}-\frac h2 F_{2,0}[0,\partial_{\bn}u_{\rin,0,1}].
\end{align*}

The corresponding solvability conditions to give the  the unique solutions $u_{\rin,m,n+1}$ are the following.
\begin{align*}
& \int_{\partial\dom} \dfrac {u_{\rin,0,-1}}h =\int_\dom f_\rin,
\qquad
\int_{\partial\dom } \frac {u_{\rin,0,0}}h =\int_{\partial\dom } \frac {g}h,
\qquad
\int_{\partial\dom} \dfrac {u_{\rin,0,1}}h =0,\\
& \int_{\partial\dom} \dfrac {u_{\rin,1,-1}}h=-\int_{\partial\dom} \frac h2 F_{2,f_\ext}[0,\partial_{\bn}u_{\rin,0,0}],\\
& \int_{\partial\dom} \dfrac {u_{\rin,1,0}}h=\int_{\partial\dom}\partial_{\bn}g-\int_{\partial\dom}\frac h2 F_{2,0}[0,\partial_{\bn}u_{\rin,0,1}],\\
& \int_{\partial\dom} \dfrac {u_{\rin,2,-1}}h=-\int_{\partial\dom}\frac h2 F_{2,0}[u_{\rin,0,-1},\partial_{\bn}u_{\rin,1,0}]
-\int_{\partial\dom}\frac {h^2}6 F_{3,f_\ext}[0,\partial_{\bn}u_{\rin,0,0}].
\end{align*}

\subsection{Case (iii)}
The  Robin boundary conditions on $\partial\dom$ for $u_{\rin,m,n}$ with $m+n\leq 2$
have the following expressions:
\begin{align*}
& u_{\rin,0,0}+ch\partial_{\bn}u_{\rin,0,0}=g,\\
& u_{\rin,0,1}+ch\partial_{\bn}u_{\rin,0,1}=-h\partial_{\bn}u_{\rin,0,0},\\
& u_{\rin,0,2}+ch\partial_{\bn}u_{\rin,0,2}=-h\partial_{\bn}u_{\rin,0,1},\\
& u_{\rin,1,0}+ch\partial_{\bn}u_{\rin,1,0}
=h\partial_{\bn}g-\frac{h^2}2F_{2,cf_\ext}[0,c\partial_{\bn}u_{\rin,0,0}],\\
& u_{\rin,2,0}+ch\partial_{\bn}u_{\rin,2,0}=\frac{h^2}2\partial_{\bn}^2g
-\frac{h^2}2F_{2,0}[u_{\rin,0,0},c\partial_{\bn}u_{\rin,1,0}]
-\frac{h^3}{6}F_{3,cf_\ext}[0,c\partial_{\bn}u_{\rin,0,0}], \\
& u_{\rin,1,1}+ch\partial_{\bn}u_{\rin,1,1}=-h\partial_{\bn}u_{\rin,1,0}-\dfrac{h^2}2F_{2,f_\ext}[0,\partial_{\bn}u_{\rin,0,0}+c\partial_{\bn}u_{\rin,0,1}].
\end{align*}

\section{Case (ii) in Section \ref{sec:5}}

For Case (ii), we treat $\eps$ and $\sigma$ as independent small parameters.
Therefore we assume  $u^+_\eps$ and $u^-_\eps$  have double asymptotic expansions
\[
u^+_\eps(\bx)=\sum_{m,n=0}^\infty u^+_{m,n}(\bx)\eps^m\sigma^n,  \quad \bx\in\dom^+,
\]
\[
u^-_\eps(\bx)=\sum_{m=0}^\infty\sum_{n=-1}^{\infty} u^-_{m,n}(\bx)\eps^m\sigma^n, \quad \bx\in \dom^-.
\]
Note that for $u^-_\eps(\bx)$, the terms $\sigma^n$ start from $n=-1$.
Define
\begin{equation}\label{eqn:subansatz+}
u^+_{m,\cdot}(\bx)=\sum_{n=0}^\infty u^+_{m,n}(\bx)\sigma^n, \quad \bx\in\dom^+,
\end{equation}
\begin{equation}\label{eqn:subansatz-}
u^-_{m,\cdot}(\bx)=\sum_{n=-1}^{\infty} u^-_{m,n}(\bx)\sigma^n, \quad \bx\in \dom^-.
\end{equation}

Inserting \eqref{eqn:subansatz+}\eqref{eqn:subansatz-} into the last three equations
and  collecting terms with equal powers of $\sigma$, we obtain on $\Gamma$
\begin{align}
&u^-_{0,-1}=0,\label{eqn:u^-_{0,-1}bdc}\\
&u^-_{0,n}=u^+_{0,n},   \quad n\geq 0,\label{eqn:u^-_{0,n}bdcapp}\\
&u^-_{1,-1}=-h\partial_{\bn}u^-_{0,-1},\label{eqn:u^-_{1,-1}bdc}\\
&u^-_{1,n}=u^+_{1,n}+h\sqbk{{\partial_{\bn}}u_{0,n}}  \quad n\geq 0;\label{eqn:u^-_{1,n}bdc}
\end{align}
and for $n\geq 0$, and all $v\in H_0^1(\dom)$
\begin{align}
&\int_{\dom^+} \nabla u^+_{0,n} \cdot \nabla v \,\rd\bx +
\int_{\dom^-} \nabla u^-_{0,n-1} \cdot \nabla v \,\rd\bx=
\delta_{0,n}\int_{\dom} f v\,\rd\bx;\label{eqn:u_{0,n}weak}\\
&\int_{\dom^+} \nabla u^+_{1,n} \cdot \nabla v \,\rd\bx +
\int_{\dom^-} \nabla u^-_{1,n-1} \cdot \nabla v \,\rd\bx\nonumber\\
&- \int_{\Gamma}h\bigl[(\nabla_{\Gamma} u^+_{0,n}-\nabla_{\Gamma} u^-_{0,n-1}) \cdot \nabla_\Gamma v
+(\partial_{\bn}u^+_{0,n}- \partial_{\bn} u^-_{0,n-1})\partial_{\bn} v\bigr]\,\rd S_{\Gamma}=0. \label{eqn:u_{1,n}weak}
\end{align}
Therefore we deduce from \eqref{eqn:u^-_{0,-1}bdc},  \eqref{eqn:u^-_{0,n}bdc} and \eqref{eqn:u_{0,n}weak}
that: $u^-_{0,-1}$ satisfies
\[
\begin{cases}
& -\Delta u^-_{0,-1}=f \quad \text{in } \dom^-,\\
& u^-_{0,-1}=0 \quad \text{on }\Gamma,\\
& u^-_{0,-1}=0\quad \text{on }\partial\dom^-\cap\partial\dom.
\end{cases}
\]
For $n\geq 0$, $u^+_{0,n}$ satisfies
\[
\begin{cases}
& -\Delta u^+_{0,n}=\delta_{0,n}f \quad \text{in } \dom^+,\\
& \partial_{\bn}u^+_{0,n}=\partial_{\bn}u^-_{0,n-1} \quad \text{on }\Gamma,\\
& u^+_{0,n}=\delta_{0,n}g\quad \text{on }\partial\dom^+\cap\partial\dom;
\end{cases}
\]
and   $u^-_{0,n}$ satisfies
\[
\begin{cases}
& -\Delta u^-_{0,n}=0 \quad \text{in } \dom^-,\\
& u^-_{0,n}=u^+_{0,n} \quad \text{on }\Gamma,\\
& u^-_{0,n}=\delta_{0,n}g\quad \text{on }\partial\dom^-\cap\partial\dom.
\end{cases}
\]
Similarly, from \eqref{eqn:u^-_{1,-1}bdc},  \eqref{eqn:u^-_{1,n}bdc} and   \eqref{eqn:u_{1,n}weak},
we obtain that
 $u^-_{1,-1}$ satisfies
\[
\begin{cases}
& -\Delta u^-_{1,-1}=0 \quad \text{in } \dom^-,\\
& u^-_{1,-1}=-h\partial_{\bn}u^-_{0,-1} \quad \text{on }\Gamma,\\
& u^-_{1,-1}=0\quad \text{on }\partial\dom^-\cap\partial\dom.
\end{cases}
\]
For $n\geq 0$, $u^+_{1,n}$ satisfies
\[
\begin{cases}
& -\Delta u^+_{1,n}=0 \quad \text{in } \dom^+,\\
& \partial_{\bn}u^+_{1,n}=\partial_{\bn}u^-_{1,n-1}-\nabla_{\Gamma}\cdot
 \bigl[h(\nabla_{\Gamma} u^+_{0,n}-\nabla_{\Gamma} u^-_{0,n-1})\bigr] \quad \text{on }\Gamma,\\
& u^+_{1,n}=0\quad \text{on }\partial\dom^+\cap\partial\dom;
\end{cases}
\]
and   $u^-_{1,n}$ satisfies
\[
\begin{cases}
& -\Delta u^-_{1,n}=0 \quad \text{in } \dom^-,\\
& u^-_{1,n}=u^+_{1,n}+h(\partial_{\bn}u^+_{0,n}-\partial_{\bn}u^-_{0,n}) \quad \text{on }\Gamma,\\
& u^-_{1,n}=0\quad \text{on }\partial\dom^-\cap\partial\dom.
\end{cases}
\]

In this Case (ii),  there is no emergence of the pure Neumann boundary value problem,
even for the  situation  in the right panel in Figure \ref{fig:interface perturbation}.

\end{document}